\newtheorem{theorem}{Theorem}[section]
\newtheorem{lemma}[theorem]{Lemma}
\theoremstyle{definition}
\theoremstyle{remark}
\newtheorem{remark}[theorem]{Remark}
\numberwithin{equation}{section}
\begin{document}
\setcounter{page}{1}

\title[2-Local derivations on matrix rings]{2-Local derivations on
associative and Jordan matrix rings over commutative rings}

\author[Sh.\ Ayupov]{Shavkat Ayupov$^1$}

\address{$^{1}$
Institute of Mathematics, National University of Uzbekistan,
Tashkent, Uzbekistan.}
\email{\textcolor[rgb]{0.00,0.00,0.84}{sh$_-$ayupov@mail.ru}}

\author[F. Arzikulov]{Farhodjon Arzikulov$^2$}

\address{$^{2}$ Department of Mathematics, Andizhan State University, Andizhan, Uzbekistan.}
\email{\textcolor[rgb]{0.00,0.00,0.84}{arzikulovfn@rambler.ru}}

%\dedicatory{This paper is dedicated to Professor ABCD}

\subjclass[2010]{16W25, 46L57, 47B47, 17C65.}

\keywords{derivation, inner derivation, 2-local derivation, matrix
ring over a commutative associative ring, Jordan ring, Jordan matrix ring.}

\thanks{}

\begin{abstract}
In the present paper we prove that every 2-local inner
derivation on the matrix ring over a commutative
ring is an inner derivation and
every derivation on an associative ring has an extension to a derivation on
the matrix ring over this associative ring
We also develop a Jordan analog of the above method and
prove that every 2-local inner
derivation on the Jordan matrix ring over a commutative
ring is a derivation.
\end{abstract}

\maketitle

\section{Introduction}

The present paper is devoted to 2-local derivations on associative
and Jordan matrix rings. Recall that a 2-local derivation is defined as follows:
given a ring $\Re$, a map $\Delta : \Re \to \Re$ (not additive in
general) is called a 2-local derivation if for every $x$, $y\in
\Re$, there exists a derivation $D_{x,y} : \Re\to \Re$ such that
$\Delta(x)=D_{x,y}(x)$ and $\Delta(y)=D_{x,y}(y)$.

In 1997, P. \v{S}emrl \cite{S} introduced the notion of 2-local
derivations and described 2-local derivations on the algebra
$B(H)$ of all bounded linear operators on the infinite-dimensional
separable Hilbert space H. A similar description for the
finite-dimensional case appeared later in \cite{KK}. In the paper
\cite{LW} 2-local derivations have been described on matrix
algebras over finite-dimensional division rings. In \cite{AK} the
authors suggested a new technique and have generalized the above
mentioned results of \cite{S} and \cite{KK} for arbitrary Hilbert
spaces. Namely they considered 2-local derivations on the algebra
$B(H)$ of all linear bounded operators on an arbitrary (no
separability is assumed) Hilbert space $H$ and proved that every
2-local derivation on $B(H)$ is a derivation. In \cite{AA2},
\cite{AK2} the authors extended the above results and give a proof
of the theorem for arbitrary von Neumann algebras.

After a number of paper were devoted to 2-local
derivations, weak-2-local derivations, Weak-2-local
$*$-derivations, 2-local triple derivations, 2-local Lie isomorphisms,
2-local *-Lie isomorphisms and so on.

Results on 2-local derivations on finite dimensional Lie algebras were obtained
in \cite{AyuKudRak16}, \cite{LaiZheng15}. Articles \cite{CaPe2016}, \cite{NiPe2015}, \cite{NiPe2015b}
are devoted to weak-2-local derivations, and \cite{ChenHuangLu15}, \cite{LiLu16},
\cite{LiLu16b}, \cite{Liu16} are devoted to 2-local $*$-Lie isomorphisms and 2-local Lie isomorphisms.
A number of theorem on 2-local triple derivations were proved in \cite{HamKuPeRu}, \cite{KuOPeRu14}.
Other classes of 2-local maps on different types of associative and Jordan algebras were studied
in \cite{AyuKu16}, \cite{BurFerGarPe15}, \cite{BurFerGarPe15b}, \cite{CaPe2015}, \cite{ChenWang} and \cite{Pe15}.

In this article we develop an algebraic approach to
investigation of derivations and 2-local derivations on
associative and Jordan rings. Since we consider sufficiently general cases
of associative rings we restrict our attention only on inner
derivations and 2-local inner derivations. In particular, we
consider the following problem:  if a derivation on
an associative ring is a 2-local inner derivation then is this derivation
inner? The answer to this question is affirmative if the ring is
generated by two elements (Theorem \ref{10}).

In section 2 we consider 2-local derivations on the matrix
ring $M_n(\Re)$ over an associative ring $\Re$. It is proved that, given a
commutative ring $\Re$, an arbitrary 2-local inner
derivation on $M_n(\Re)$ is an inner derivation. This result
extends the one obtained in \cite{LW} to the infinite dimensional
case but for a commutative ring $\Re$ and in \cite{AK4} to the case of a
commutative ring but only for 2-local inner derivations.

In section 3 we show that every derivation on an
associative ring $\Re$ has an extension to a derivation on
the matrix ring $M_n(\Re)$ of $n\times n$ matrices over $\Re$.

In section 4 the relationship between 2-local derivations
and 2-local Jordan derivations on associative rings is studied.

In section 5 2-local derivations on the Jordan matrix ring over a commutative associative ring
are studied. Namely, we investigate 2-local inner derivations on the Jordan
ring $H_n(\Re)$ of $n$-dimensional matrices over a commutative associative ring $\Re$.
It is proved that every such 2-local inner derivation is
a derivation. For this propose we use a Jordan analog of the algebraic approach to the
investigation of 2-local derivations applied to matrix
rings over commutative associative rings developed in section 2. Thus
the method developed in this paper is sufficiently universal
and can also be applied to Jordan and Lie rings. Its respective
modification allows to prove similar problems for Jordan and Lie rings of
matrices over a $*$-ring.

\section{2-local derivations on matrix rings}

\medskip

Let $\Re$ be a ring. Recall that a map $D : \Re\to \Re$ is called
a derivation, if $D(x+y)=D(x)+D(y)$ and $D(xy)=D(x)y+xD(y)$ for
any two elements $x$, $y\in \Re$.

A map $\Delta : \Re\to \Re$ is called a 2-local derivation, if for
any two elements $x$, $y\in \Re$ there exists a derivation
$D_{x,y}:\Re\to \Re$ such that $\Delta (x)=D_{x,y}(x)$, $\Delta
(y)=D_{x,y}(y)$.

Now let $\Re$ be an associative ring. A derivation $D$ on $\Re$
is called an inner derivation, if there exists an element $a\in
\Re$ such that
$$
D(x)=ax-xa, x\in \Re.
$$
A map $\Delta : \Re\to \Re$ is called a 2-local inner derivation,
if for any two elements $x$, $y\in \Re$ there exists an element
$a\in \Re$ such that $\Delta (x)=ax-xa$, $\Delta (y)=ay-ya$.

Let $\Re$ be a unital associative ring, $M_n(\Re)$ be the
matrix ring over $\Re$, $n>1$, of matrices of the form
$$
\left[%
\begin{array}{cccc}
 a^{1,1} & a^{1,2} & \cdots & a^{1,n}\\
a^{2,1} & a^{2,2} & \cdots & a^{2,n}\\
\vdots & \vdots & \ddots & \vdots\\
a^{n,1} & a^{n,2} & \cdots & a^{n,n}\\
\end{array}%
\right],
a^{i,j}\in \Re, i,j=1,2,\dots ,n.
$$
Let $\{e_{i,j}\}_{i,j=1}^n$ be the set of matrix units in
$M_n(\Re)$, i.e. $e_{i,j}$ is a matrix with components
$a^{i,j}={\bf 1}$ and $a^{k,l}={\bf 0}$ if $(i,j)\neq(k,l)$, where
${\bf 1}$ is the identity element, ${\bf 0}$ is the zero element of
$\Re$ and a matrix $a\in M_n(\Re)$ is written as $a=\sum_{k,l=1}^n
a^{k,l}e_{k,l}$, where $a^{k,l}\in \Re$ for $k,l=1,2,\dots, n$.

\medskip

First, let us prove some lemmata which will be used in the
proof of Theorem \ref{1}. Throughout the paper, $\Re$ denotes a unital associative
ring where $2$ is invertible, $M_n(\Re)$ denotes the ring of $n\times
n$ matrices over $\Re$, $n>1$. Let $\Delta :M_n(\Re)\to M_n(\Re)$
be a 2-local inner derivation. Let us fix a subset
$\{a(i,j)\}_{i,j=1}^n\subset M_n(\Re)$  such that
$$
\Delta(e_{i,j})=a(i,j)e_{i,j}-e_{i,j}a(i,j),
$$
$$
\Delta(\sum_{k=1}^{n-1}e_{k,k+1})=a(i,j)(\sum_{k=1}^{n-1}e_{k,k+1})-(\sum_{k=1}^{n-1}e_{k,k+1})a(i,j).
$$
Put $a_{i,j}=e_{i,i}a(j,i)e_{j,j}$, for all pairs of distinct
indices $i$, $j$ and let $\sum_{k\neq l} a_{k,l}$ be the sum of
all such elements.

\begin{lemma} \label{21}
Let $\Delta :M_n(\Re)\to M_n(\Re)$ be a 2-local inner derivation.
Then the identity
$$
e_{k,k}a(i,j)e_{i,j}=e_{k,k}a(i,k)e_{i,j}
$$
holds for any pair $i$, $k$ of distinct indices from $\{1,2,\dots,n\}$ and for any
$j\in\{1,2,\dots,n\}$, and the equality
$$
e_{i,j}a(i,j)e_{k,k}=e_{i,j}a(k,j)e_{k,k}.
$$
holds for any pair $j$, $k$ of distinct indices from $\{1,2,\dots,n\}$ and for any
$i\in\{1,2,\dots,n\}$.
\end{lemma}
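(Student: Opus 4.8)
The plan is to reduce each of the two claimed identities to a statement about $\Delta$ evaluated on a \emph{single} matrix unit, and then to reconcile the two resulting expressions by invoking the 2-locality of $\Delta$ on a well-chosen pair. I treat the first identity; the second is its right-handed mirror image. The starting observation is that left multiplication by $e_{k,k}$ with $k\neq i$ annihilates the ``wrong'' half of an inner derivation of $e_{i,j}$: since $e_{k,k}e_{i,j}=0$, for any $a$ with $\Delta(e_{i,j})=ae_{i,j}-e_{i,j}a$ one gets $e_{k,k}\Delta(e_{i,j})e_{j,j}=e_{k,k}a\,e_{i,j}$. Taking $a=a(i,j)$ yields $e_{k,k}a(i,j)e_{i,j}=e_{k,k}\Delta(e_{i,j})e_{j,j}$, an expression depending only on the fixed matrix $\Delta(e_{i,j})$ and not on the particular implementer. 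An analogous manipulation, now using $e_{i,k}e_{k,j}=e_{i,j}$ together with $e_{k,k}e_{i,k}=0$, gives $e_{k,k}a(i,k)e_{i,j}=e_{k,k}\Delta(e_{i,k})e_{k,j}$.

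Thus the first identity is equivalent to the relation $e_{k,k}\Delta(e_{i,j})e_{j,j}=e_{k,k}\Delta(e_{i,k})e_{k,j}$ between the values of $\Delta$ on two different matrix units. This is where 2-locality enters: I would apply the 2-local inner derivation property to the pair $x=e_{i,j}$, $y=e_{i,k}$ to obtain a single element $c\in M_n(\Re)$ implementing $\Delta$ on both, that is $\Delta(e_{i,j})=ce_{i,j}-e_{i,j}c$ and $\Delta(e_{i,k})=ce_{i,k}-e_{i,k}c$. Feeding this common $c$ into the two reductions of the previous paragraph, the same index cancellations $e_{k,k}e_{i,j}=0$ and $e_{k,k}e_{i,k}=0$ (valid precisely because $k\neq i$) collapse both sides to one and the same element $e_{k,k}c\,e_{i,j}$. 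Hence $e_{k,k}a(i,j)e_{i,j}=e_{k,k}c\,e_{i,j}=e_{k,k}a(i,k)e_{i,j}$, as required. For the second identity I would run the transposed argument: multiply on the right by $e_{k,k}$, now using $e_{i,j}e_{k,k}=0$ via $j\neq k$, reduce $e_{i,j}a(i,j)e_{k,k}$ and $e_{i,j}a(k,j)e_{k,k}$ to expressions in $\Delta(e_{i,j})$ and $\Delta(e_{k,j})$, and finish with a common implementer $c'$ for the pair $e_{i,j}$, $e_{k,j}$.

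The main obstacle is conceptual rather than computational: the fixed implementers $a(i,j)$ and $a(i,k)$ are a priori unrelated, so one cannot compare them directly, and the seemingly natural shortcut fails. That shortcut would note that $a(i,j)-a(i,k)$ commutes with $s=\sum_{l=1}^{n-1}e_{l,l+1}$, since both implement $\Delta$ on $s$, forcing the difference into upper-triangular Toeplitz form; but this annihilates only its strictly lower-triangular entries, which pins down the target component $e_{k,k}(\,\cdot\,)e_{i,j}$ when $k>i$ and says nothing when $k<i$. Routing the comparison instead through the fixed matrices $\Delta(e_{i,j})$, $\Delta(e_{i,k})$ and a fresh common implementer $c$ bypasses the triangularity issue and handles all $i\neq k$ uniformly; the remaining work is just bookkeeping of matrix-unit products, where the only point to monitor is that each cancellation genuinely uses the hypothesis $k\neq i$ (respectively $j\neq k$).
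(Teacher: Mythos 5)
Your proposal is correct and follows essentially the same route as the paper: both arguments apply 2-locality to the pair $e_{i,j}$, $e_{i,k}$ to obtain a common implementer $c$, use $e_{k,k}e_{i,j}=e_{k,k}e_{i,k}=0$ (from $k\neq i$) to reduce $e_{k,k}a(i,j)e_{i,j}$ and $e_{k,k}a(i,k)e_{i,k}$ to $e_{k,k}ce_{i,j}$ and $e_{k,k}ce_{i,k}$, and then shift indices via $e_{i,k}e_{k,j}=e_{i,j}$. The only difference is presentational (you first rewrite each side in terms of the fixed values $\Delta(e_{i,j})$, $\Delta(e_{i,k})$ before introducing $c$, whereas the paper introduces the common implementer at the outset), which does not change the substance.
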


\begin{proof}
Let $d\in M_n(\Re)$ be such element that
$$
\Delta(e_{i,j})=de_{i,j}-e_{i,j}d,
\Delta(e_{i,k})=de_{i,k}-e_{i,k}d.
$$
Then
$$
a(i,j)e_{i,j}-e_{i,j}a(i,j)=de_{i,j}-e_{i,j}d,
$$
$$
a(i,k)e_{i,k}-e_{i,k}a(i,k)=de_{i,k}-e_{i,k}d,
$$
and
$$
e_{k,k}a(i,j)e_{i,j}=e_{k,k}de_{i,j},
e_{k,k}a(i,k)e_{i,k}=e_{k,k}de_{i,k}.
$$
Hence
$$
e_{k,k}a(i,k)e_{i,k}e_{k,j}=e_{k,k}de_{i,k}e_{k,j}=e_{k,k}de_{i,j}
$$
and
$$
e_{k,k}a(i,k)e_{i,j}=e_{k,k}de_{i,j}=e_{k,k}a(i,j)e_{i,j}.
$$
Similarly,
$$
e_{i,j}a(i,j)e_{k,k}=e_{i,j}a(k,j)e_{k,k}.
$$
\end{proof}

\medskip

\begin{lemma} \label{2}
Let $\Delta :M_n(\Re)\to M_n(\Re)$ be a 2-local inner derivation.
Then for any pair $i$, $j$ of distinct indices in
$\{1,2,\dots,n\}$ the following equality holds
$$
\Delta(e_{i,j})=(\sum_{k,l=1, k\neq
l}^na_{k,l})e_{i,j}-e_{i,j}(\sum_{k,l=1, k\neq
l}^na_{l,k})+a(i,j)^{i,i}e_{i,j}-e_{i,j}a(i,j)^{j,j},
$$
where $a(i,j)^{i,i}$, $a(i,j)^{j,j}$ are the appropriate
components of the matrices $e_{i,i}a(i,j)e_{i,i}$,
$e_{j,j}a(i,j)e_{j,j}$ respectively.
\end{lemma}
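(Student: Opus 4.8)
The plan is to compute $\Delta(e_{i,j})=a(i,j)e_{i,j}-e_{i,j}a(i,j)$ directly in terms of the matrix components of $a(i,j)$, and then to recognize the outcome as the asserted expression by feeding in Lemma \ref{21}. Writing $a(i,j)=\sum_{p,q}a(i,j)^{p,q}e_{p,q}$, the two products collapse to single-index sums,
$$
a(i,j)e_{i,j}=\sum_{k=1}^n a(i,j)^{k,i}e_{k,j}, \qquad e_{i,j}a(i,j)=\sum_{l=1}^n a(i,j)^{j,l}e_{i,l},
$$
since multiplying by $e_{i,j}$ on the left keeps only column $i$ of $a(i,j)$ and on the right keeps only row $j$.

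First I would split each of these sums into its diagonal and off-diagonal parts. The term $k=i$ in the first sum and the term $l=j$ in the second contribute $a(i,j)^{i,i}e_{i,j}$ and $a(i,j)^{j,j}e_{i,j}$, which (with signs) are precisely the last two summands $+a(i,j)^{i,i}e_{i,j}-e_{i,j}a(i,j)^{j,j}$ in the statement. Thus it remains only to match the off-diagonal parts $\sum_{k\neq i}a(i,j)^{k,i}e_{k,j}$ and $\sum_{l\neq j}a(i,j)^{j,l}e_{i,l}$ with $\big(\sum_{k\neq l}a_{k,l}\big)e_{i,j}$ and $e_{i,j}\big(\sum_{k\neq l}a_{l,k}\big)$ respectively.

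The decisive step is to rewrite the off-diagonal components via Lemma \ref{21}. Reading off $(k,j)$-entries from its first identity gives $a(i,j)^{k,i}=a(i,k)^{k,i}$ for $k\neq i$, and reading off $(i,l)$-entries from its second identity gives $a(i,j)^{j,l}=a(l,j)^{j,l}$ for $l\neq j$. On the other hand, from the definition $a_{k,l}=e_{k,k}a(l,k)e_{l,l}=a(l,k)^{k,l}e_{k,l}$ one computes directly that
$$
\Big(\sum_{k\neq l}a_{k,l}\Big)e_{i,j}=\sum_{k\neq i}a(i,k)^{k,i}e_{k,j}, \qquad e_{i,j}\Big(\sum_{k\neq l}a_{l,k}\Big)=\sum_{l\neq j}a(l,j)^{j,l}e_{i,l}.
$$
After the substitution the off-diagonal parts therefore agree term by term, and reassembling the diagonal and off-diagonal contributions yields the claimed formula.

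I expect the only real difficulty to be bookkeeping rather than anything conceptual: one must track which index is being summed in each matrix product and, since $\Re$ need not be commutative here, the side on which each scalar component $a(i,j)^{p,q}$ sits. In particular, the apparent asymmetry between $\sum_{k\neq l}a_{k,l}$ and $\sum_{k\neq l}a_{l,k}$ in the statement is harmless — the two range over the same set of summands — but it conveniently signals that the first identity of Lemma \ref{21} is used for the left multiplier and the second for the right multiplier.
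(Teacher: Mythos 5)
Your proof is correct and follows essentially the same route as the paper's: decompose $\Delta(e_{i,j})=a(i,j)e_{i,j}-e_{i,j}a(i,j)$ over the diagonal matrix units, split off the $k=i$ and $l=j$ terms, and convert the remaining off-diagonal terms via Lemma \ref{21} into $(\sum_{k\neq l}a_{k,l})e_{i,j}$ and $e_{i,j}(\sum_{k\neq l}a_{l,k})$; the paper merely phrases the same computation with the sandwiches $e_{k,k}a(i,j)e_{i,j}$ instead of scalar entries. The only (cosmetic) slip is that your prose interchanges \emph{left} and \emph{right} --- right multiplication by $e_{i,j}$ selects column $i$ and left multiplication selects row $j$ --- but your displayed formulas are the correct ones and the rest of the argument uses them correctly.
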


\begin{proof}
We have
$$
\Delta(e_{i,j})=a(i,j)e_{i,j}-e_{i,j}a(i,j)=\sum_{k=1}^ne_{k,k}
a(i,j)e_{i,j}-\sum_{k=1}^ne_{i,j}a(i,j)e_{k,k}
$$
$$
=\sum_{k=1, k\neq i}^ne_{k,k} a(i,j)e_{i,j}-\sum_{k=1, k\neq
j}^ne_{i,j}a(i,j)e_{k,k}+e_{i,i}a(i,j)e_{i,j}-e_{i,j}a(i,j)e_{j,j}
$$
$$
=\sum_{k=1, k\neq i}^ne_{k,k} a(i,k)e_{i,j}-\sum_{k=1, k\neq
j}^ne_{i,j}a(k,j)e_{k,k}+a(i,j)^{i,i}e_{i,j}-e_{i,j}a(i,j)^{j,j}
$$
$$
=\sum_{k=1, k\neq i}^na_{k,i}e_{i,j}-\sum_{k=1, k\neq
j}^ne_{i,j}a_{j,k}+a(i,j)^{i,i}e_{i,j}-e_{i,j}a(i,j)^{j,j}
$$
$$
=(\sum_{k,l=1, k\neq l}^na_{k,l})e_{i,j}-e_{i,j}(\sum_{k,l=1, k\neq
l}^na_{l,k})+a(i,j)^{i,i}e_{i,j}-e_{i,j}a(i,j)^{j,j}
$$
by Lemma \ref{21}.
\end{proof}

Let $x_o=\sum_{k=1}^{n-1}e_{k,k+1}$. From the hypothesis there exists an element
$c\in M_n(\Re)$ such that
$$
\Delta(x_o)=cx_o-x_oc.
$$
Let $c=\sum_{i,j=1}^nc_{i,j}$ be the decomposition of $c$ with respect to $\{e_{i,j}\}_{i,j=1}^n$,
where $c_{i,j}=e_{i,i}ce_{j,j}$, $i,j=1,2,\dots, n$.

\begin{lemma} \label{3}
Let $\Delta :M_n(\Re)\to M_n(\Re)$ be a 2-local inner derivation.
Let $k$, $l$ be an arbitrary couple of distinct numbers in $\{1,2,\dots,n\}$,
and let $b\in M_n(\Re)$ be an element such that
$$
\Delta(x_o)=bx_o-x_ob.
$$
Then $c^{k,k}-c^{l,l}=b^{k,k}-b^{l,l}$, where
$c_{i,i}=c^{i,i}e_{i,i}$, $b_{i,i}=b^{i,i}e_{i,i}$, $c^{i,i}$,
$b^{i,i}\in \Re$, $i=1,2,\dots,n$.
\end{lemma}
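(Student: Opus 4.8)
The plan is to pass to the difference $d=c-b$ and reduce everything to a commutation statement. Since both $c$ and $b$ satisfy $\Delta(x_o)=cx_o-x_oc=bx_o-x_ob$, subtracting gives $dx_o-x_od=\mathbf{0}$, i.e.\ $d$ commutes with the shift matrix $x_o=\sum_{k=1}^{n-1}e_{k,k+1}$. Writing $d_{i,i}=d^{i,i}e_{i,i}$ with $d^{i,i}=c^{i,i}-b^{i,i}$, the asserted identity $c^{k,k}-c^{l,l}=b^{k,k}-b^{l,l}$ is exactly $d^{k,k}=d^{l,l}$. Hence it suffices to show that an element commuting with $x_o$ has all of its diagonal entries equal.

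To extract this, I would compute the commutator entrywise. Expanding $d=\sum_{i,j}d^{i,j}e_{i,j}$ and using $e_{i,j}e_{k,k+1}=\delta_{j,k}e_{i,k+1}$ together with $e_{k,k+1}e_{i,j}=\delta_{k+1,i}e_{k,j}$, the $(p,q)$-component of $dx_o$ equals $d^{p,q-1}$ for $q\geq 2$ (and $\mathbf{0}$ for $q=1$), while the $(p,q)$-component of $x_od$ equals $d^{p+1,q}$ for $p\leq n-1$ (and $\mathbf{0}$ for $p=n$). Setting $dx_o=x_od$ and comparing components therefore yields $d^{i,j}=d^{i+1,j+1}$ for all $1\leq i,j\leq n-1$; that is, $d$ is constant along each super/sub-diagonal $j-i=\mathrm{const}$.

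Finally, specializing this relation to $j=i$ gives $d^{i,i}=d^{i+1,i+1}$ for every $i\leq n-1$, so $d^{1,1}=d^{2,2}=\cdots=d^{n,n}$, and in particular $d^{k,k}=d^{l,l}$ for the given pair, which is the claim. I do not anticipate a genuine obstacle: the computation is elementary. The only point that needs attention is that $\Re$ need not be commutative, so one must keep each scalar $d^{i,j}$ in its correct left/right position when multiplying by $x_o$; this causes no difficulty because every nonzero entry of $x_o$ is the identity $\mathbf{1}$, which acts trivially on either side.
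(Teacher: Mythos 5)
Your proposal is correct and is essentially the paper's own argument in a mildly repackaged form: the paper compares the $(k,k+1)$-components of $cx_o-x_oc$ and $bx_o-x_ob$ to get $c^{k,k}-c^{k+1,k+1}=b^{k,k}-b^{k+1,k+1}$ and then telescopes along $(k,k+1),(k+1,k+2),\dots,(l-1,l)$, which is exactly your entrywise computation applied to $d=c-b$ restricted to the diagonal. Your version additionally records that $d$ is constant along every super/sub-diagonal, but that extra information is not needed and does not change the nature of the proof.
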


\begin{proof} We may assume that  $k<l$. We have
$$
\Delta(x_o)=cx_o-x_oc=bx_o-x_ob.
$$
Hence
$$
e_{k,k}(cx_o-x_oc)e_{k+1,k+1}=e_{k,k}(bx_o-x_ob)e_{k+1,k+1}
$$
and
$$
c^{k,k}-c^{k+1,k+1}=b^{k,k}-b^{k+1,k+1}.
$$
Then for the sequence
$$
(k,k+1),(k+1,k+2)\dots (l-1,l)
$$
we have
$$
c^{k,k}-c^{k+1,k+1}=b^{k,k}-b^{k+1,k+1},
c^{k+1,k+1}-c^{k+2,k+2}=b^{k+1,k+1}-b^{k+2,k+2},\dots
$$
$$
c^{l-1,l-1}-c^{l,l}=b^{l-1,l-1}-b^{l,l}.
$$
Hence
$$
c^{k,k}-b^{k,k}=c^{k+1,k+1}-b^{k+1,k+1},
c^{k+1,k+1}-b^{k+1,k+1}=c^{k+2,k+2}-b^{k+2,k+2},\dots
$$
$$
c^{l-1,l-1}-b^{l-1,l-1}=c^{l,l}-b^{l,l}.
$$
Therefore $c^{k,k}-b^{k,k}=c^{l,l}-b^{l,l}$, i.e.
$c^{k,k}-c^{l,l}=b^{k,k}-b^{l,l}$. The proof is complete.
\end{proof}

\medskip

Let $a_{i,i}=c_{i,i}$ for $i=,2,\dots,n$ and
$\bar{a}=\sum_{i,j=1}^na_{i,j}$.

The following theorem is the main result of the paper.

\begin{theorem} \label{1}
Let $\Re$ be a commutative associative unital ring, and let $M_n(\Re)$
be the ring of $n\times n$ matrices over $\Re$, $n>1$. Then
any 2-local inner derivation on the matrix ring $M_n(\Re)$ is an inner derivation.
\end{theorem}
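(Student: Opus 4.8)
The plan is to produce a single element and show it generates the derivation. Set $\bar a=\sum_{i,j=1}^n a_{i,j}$ (with $a_{i,i}=c_{i,i}$ as fixed before the theorem) and consider the inner derivation $D_{\bar a}(x)=\bar a x-x\bar a$; the whole task is to prove $\Delta=D_{\bar a}$. First I would verify agreement on the off-diagonal matrix units. Expanding $\bar a e_{i,j}-e_{i,j}\bar a$ by means of $a_{k,l}=e_{k,k}a(l,k)e_{l,l}$ and $a_{i,i}=c_{i,i}$ gives $(\sum_{k\ne i}a_{k,i})e_{i,j}-e_{i,j}(\sum_{l\ne j}a_{j,l})+(c^{i,i}-c^{j,j})e_{i,j}$, and Lemma \ref{2}, after using commutativity of $\Re$ to slide the scalar diagonal entries across $e_{i,j}$, gives $\Delta(e_{i,j})=(\sum_{k\ne i}a_{k,i})e_{i,j}-e_{i,j}(\sum_{l\ne j}a_{j,l})+(a(i,j)^{i,i}-a(i,j)^{j,j})e_{i,j}$. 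The off-diagonal parts are literally identical, so the two values coincide exactly when the diagonal differences agree, and this is precisely what Lemma \ref{3} yields when applied with $b=a(i,j)$, which also realizes $\Delta(x_o)$: namely $a(i,j)^{i,i}-a(i,j)^{j,j}=c^{i,i}-c^{j,j}$.

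For the diagonal units $e_{i,i}$, which Lemma \ref{2} excludes, I would argue blockwise. Writing $\Delta(e_{i,i})=a(i,i)e_{i,i}-e_{i,i}a(i,i)=\sum_{p\ne i}e_{p,p}a(i,i)e_{i,i}-\sum_{q\ne i}e_{i,i}a(i,i)e_{q,q}$ and comparing with $D_{\bar a}(e_{i,i})=\sum_{k\ne i}a_{k,i}-\sum_{l\ne i}a_{i,l}$, the two match term by term once Lemma \ref{21} is invoked in the two forms $e_{p,p}a(i,i)e_{i,i}=e_{p,p}a(i,p)e_{i,i}=a_{p,i}$ and $e_{i,i}a(i,i)e_{q,q}=e_{i,i}a(q,i)e_{q,q}=a_{i,q}$. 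Together with the first paragraph this establishes $\Delta(e_{i,j})=\bar a e_{i,j}-e_{i,j}\bar a$ for all $i,j$.

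To pass from matrix units to an arbitrary $x$, I would pass to the difference $\nabla=\Delta-D_{\bar a}$. If $a$ realizes $\Delta$ at a pair $(x,y)$, then $a-\bar a$ realizes $\nabla$ at $(x,y)$, so $\nabla$ is again a 2-local inner derivation, and by the previous steps it vanishes on every $e_{i,j}$; thus it suffices to show that such a $\nabla$ is identically zero. I would do this block by block: to compute $e_{p,p}\nabla(x)e_{q,q}$, apply the 2-local property to the pair $(x,e_{q,p})$ when $p\ne q$, and to $(x,e_{p,p})$ when $p=q$. The realizing element $c$ then commutes with the chosen matrix unit (because $\nabla$ kills it), which forces exactly the $p$-th row and $q$-th column of $c$ to be off-diagonally zero with $c^{p,p}=c^{q,q}$; substituting into $\nabla(x)=cx-xc$ collapses the block to $(c^{p,p}-c^{q,q})x^{p,q}e_{p,q}=0$ by commutativity of $\Re$. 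Since every block of $\nabla(x)$ vanishes, $\Delta(x)=\bar a x-x\bar a$ for all $x$, and $\Delta$ is inner.

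I expect the genuine difficulty to lie in this last step: organizing the transition from matrix units to arbitrary matrices despite $\Delta$ not being additive. The decisive point is the choice of probe — to annihilate the $(p,q)$-block one must pair $x$ with $e_{q,p}$ rather than $e_{p,q}$, so that the commutation relation pins down precisely the $p$-th row and $q$-th column of the approximant, which are exactly the entries of $c$ appearing in $cx-xc$ restricted to that block; the diagonal probe $e_{p,p}$ is what controls both the $p$-th row and the $p$-th column simultaneously for the diagonal blocks. Getting these probe/index bookkeeping details right, and confirming that commutativity of $\Re$ does the final cancellation, is where the care is needed.
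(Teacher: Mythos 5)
Your proof is correct, and it is built from the same ingredients as the paper's (the element $\bar a$, and Lemmas \ref{21}, \ref{2} and \ref{3} in the same roles), but the passage from matrix units to a general $x$ is organized differently. The paper never forms the difference $\nabla=\Delta-D_{\bar a}$: for each block of an arbitrary $x$ it takes a realizing element $d(i,j)$ for the pair $(e_{i,j},x)$ (and $d(i,i)$, together with auxiliary elements $v,w$, for the diagonal blocks), identifies the relevant off-diagonal rows and columns of $d(i,j)$ with those of $\bar a$ via Lemma \ref{2}, identifies the diagonal difference $d(i,j)^{j,j}-d(i,j)^{i,i}$ with $c^{j,j}-c^{i,i}$ via Lemma \ref{3}, and computes $e_{j,j}\Delta(x)e_{i,i}=e_{j,j}(\bar a x-x\bar a)e_{i,i}$ directly, carrying $\bar a$ through every block. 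You instead first pin $\Delta$ down on every matrix unit (your use of Lemma \ref{21} with $j=i$ to obtain $\Delta(e_{i,i})=\bar a e_{i,i}-e_{i,i}\bar a$ is a legitimate shortcut past the paper's $v,w$ computation, and Lemma \ref{3} with $b=a(i,j)$ does resolve the diagonal ambiguity left by Lemma \ref{2}), and then prove the reusable statement that a 2-local inner derivation vanishing on all matrix units is identically zero. Your probe bookkeeping is right: pairing $x$ with $e_{q,p}$ forces the realizing element $c$ to satisfy $c^{p,l}=0$ for $l\ne p$, $c^{k,q}=0$ for $k\ne q$ and $c^{p,p}=c^{q,q}$, which is exactly what collapses $e_{p,p}\nabla(x)e_{q,q}$ to $(c^{p,p}-c^{q,q})x^{p,q}e_{p,q}=0$; commutativity of $\Re$ enters at the same single point as in the paper. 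The subtraction trick buys a cleaner separation of the argument into \emph{compute $\Delta$ on matrix units} and \emph{a 2-local inner derivation is determined by its values on matrix units}, at no loss of generality.
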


\begin{proof}
Let $\Delta :M_n(\Re)\to M_n(\Re)$ be a 2-local inner derivation,
$x$ be an arbitrary matrix in $M_n(\Re)$. Let $\bar{a}$ be the element described in
previous paragraphs. We shall show that $\Delta(x)=\bar{a}x-x\bar{a}$. Let $d(i,j)\in
M_n(\Re)$ be an element such that
$$
\Delta(e_{i,j})=d(i,j)e_{i,j}-e_{i,j}d(i,j), \,\,\,\,
\Delta(x)=d(i,j)x-xd(i,j)
$$
and $i\neq j$. Then by Lemma \ref{2} we have
$$
\Delta(e_{i,j})=d(i,j)e_{i,j}-e_{i,j}d(i,j)
$$
$$
=e_{i,i}d(i,j)e_{i,j}-e_{i,j}d(i,j)e_{j,j}+
(1-e_{i,i})d(i,j)e_{i,j}-e_{i,j}d(i,j)(1-e_{j,j})
$$
$$
=a(i,j)_{i,i}e_{i,j}-e_{i,j}a(i,j)_{j,j}+(\sum_{k\neq l}
a_{k,l})e_{i,j}-e_{i,j}(\sum_{k\neq l} a_{k,l})
$$
for all $i$, $j$ in $\{1,2,\dots,n\}$.

Since
$e_{i,i}d(i,j)e_{i,j}-e_{i,j}d(i,j)e_{j,j}=a(i,j)_{i,i}e_{i,j}-e_{i,j}a(i,j)_{j,j}$
we have
$$
(1-e_{i,i})d(i,j)e_{i,i}=(\sum_{k\neq l} a_{k,l})e_{i,i},
$$
$$
e_{j,j}d(i,j)(1-e_{j,j})=e_{j,j}(\sum_{k\neq l} a_{k,l})
$$
for all pairs of distinct numbers $i$ and $j$ in $\{1,2,\dots,n\}$.

Hence
$$
e_{j,j}\Delta(x)e_{i,i}=e_{j,j}(d(i,j)x-xd(i,j))e_{i,i}
$$
$$
=e_{j,j}d(i,j)(1-e_{j,j})xe_{i,i}+
e_{j,j}d(i,j)e_{j,j}xe_{i,i}-e_{j,j}x(1-e_{i,i})d(i,j)e_{i,i}-e_{j,j}xe_{i,i}d(i,j)e_{i,i}
$$
$$
=e_{j,j}(\sum_{k\neq l} a_{k,l})xe_{i,i}-e_{j,j}x(\sum_{k\neq l}
a_{k,l})e_{i,i}+
e_{j,j}d(i,j)e_{j,j}xe_{i,i}-e_{j,j}xe_{i,i}d(i,j)e_{i,i}.
$$
We have
$$
\Delta(\sum_{k=1}^{n-1}e_{k,k+1})=a(i,j)(\sum_{k=1}^{n-1}e_{k,k+1})-(\sum_{k=1}^{n-1}e_{k,k+1})a(i,j)
$$
by the definition of $a(i,j)$. Then by Lemma \ref{3} we have
$$
a(i,j)^{j,j}-a(i,j)^{i,i}=c^{j,j}-c^{i,i},
$$
where
$$
c_{k,k}=c^{k,k}e_{k,k}, c^{k,k}\in \Re, k=1,2,\dots,n,
$$
$$
a(i,j)=\sum_{kl=1}^n a(i,j)^{k,l}e_{k,l}, a(i,j)^{k,l}\in \Re,
k,l=1,2,\dots,n.
$$
Since
$$
d(i,j)e_{i,j}-e_{i,j}d(i,j)=a(i,j)e_{i,j}-e_{i,j}a(i,j)
$$
we have
$$
e_{i,i}d(i,j)e_{i,j}-e_{i,j}d(i,j)e_{j,j}=e_{i,i}a(i,j)e_{i,j}-e_{i,j}a(i,j)e_{j,j}
$$
and
$$
(d(i,j)^{i,i}-d(i,j)^{j,j})e_{i,j}=(a(i,j)^{i,i}-a(i,j)^{j,j})e_{i,j},
$$
where $d(i,j)=\sum_{kl=1}^n d(i,j)^{k,l}e_{k,l}$.

Hence
$$
d(i,j)^{i,i}-d(i,j)^{j,j}=a(i,j)^{i,i}-a(i,j)^{j,j},
$$
i.e.
$$
d(i,j)^{j,j}-d(i,j)^{i,i}=a(i,j)^{j,j}-a(i,j)^{i,i}.
$$

Therefore
$$
e_{j,j}d(i,j)e_{j,j}xe_{i,i}-e_{j,j}xe_{i,i}d(i,j)e_{i,i}=d(i,j)^{j,j}x^{j,i}e_{j,i}-x^{j,i}d(i,j)^{i,i}e_{j,i}
$$
$$
=(d(i,j)^{j,j}-d(i,j)^{i,i})x^{j,i}e_{j,i}=(a(i,j)^{j,j}-a(i,j)^{i,i})x^{j,i}e_{j,i}
$$
$$
=(c^{j,j}-c^{i,i})x^{j,i}e_{j,i}=c^{j,j}x^{j,i}e_{j,i}-x^{j,i}c^{i,i}e_{j,i}=(c^{j,j}e_{j,j})e_{j,j}xe_{i,i}-e_{j,j}xe_{i,i}(c^{i,i}e_{i,i})
$$
$$
=a_{j,j}e_{j,j}xe_{i,i}-e_{j,j}xe_{i,i}a_{i,i},
$$
where $x=\sum_{kl=1}^n x^{k,l}e_{k,l}$.

Hence
$$
e_{j,j}\Delta(x)e_{i,i}=e_{j,j}(\sum_{k\neq l}
a_{k,l})xe_{i,i}-e_{j,j}x(\sum_{k\neq l} a_{k,l})e_{i,i}+
a_{j,j}e_{j,j}xe_{i,i}-e_{j,j}xe_{i,i}a_{i,i}
$$
$$
=e_{j,j}(\sum_{k\neq l} a_{k,l})xe_{i,i}-e_{j,j}x(\sum_{k\neq l}
a_{k,l})e_{i,i}+e_{j,j}(\sum_{k=1}^na_{k,k})xe_{i,i}-e_{j,j}x(\sum_{k=1}^na_{k,k})e_{i,i}
$$
$$
=e_{j,j}(\sum_{kl=1}^n a_{k,l})xe_{i,i}-e_{j,j}x(\sum_{kl=1}^n
a_{k,l})e_{i,i}=e_{j,j}(\bar{a}x-x\bar{a})e_{i,i}.
$$

Let $d(i,i)$, $v$, $w\in \mathcal{M}$ be elements such that
$$
\bigtriangleup(e_{i,i})=d(i,i)e_{i,i}-e_{i,i}d(i,i), \,\,\,\,
\bigtriangleup(x)=d(i,i)x-xd(i,i),
$$
$$
\bigtriangleup(e_{i,i})=ve_{i,i}-e_{i,i}v,
\bigtriangleup(e_{i,j})=ve_{i,j}-e_{i,j}v,
$$
and
$$
\bigtriangleup(e_{i,i})=we_{i,i}-e_{i,i}w,
\bigtriangleup(e_{j,i})=we_{j,i}-e_{j,i}w.
$$
Then
$$
(1-e_{i,i})a(i,j)e_{i,i}=(1-e_{i,i})ve_{i,i}=(1-e_{i,i})d(i,i)e_{i,i},
$$
and
$$
e_{i,i}a(j,i)(1-e_{i,i})=e_{i,i}w(1-e_{i,i})=e_{i,i}d(i,i)(1-e_{i,i}).
$$
By Lemma \ref{2} we have
$$
\Delta(e_{i,j})=a(i,j)e_{i,j}-e_{i,j}a(i,j)
$$
$$
=(\sum_{k\neq l} a_{k,l})e_{i,j}-e_{i,j}(\sum_{k\neq l}
a_{k,l})+a(i,j)_{i,i}e_{i,j}-e_{i,j}a(i,j)_{j,j}
$$
and
$$
(1-e_{i,i})a(i,j)e_{i,i}=(\sum_{k\neq l} a_{k,l})e_{i,i}.
$$
Similarly
$$
e_{i,i}a(j,i)(1-e_{i,i})=e_{i,i}(\sum_{k\neq l} a_{k,l}).
$$

Hence
$$
e_{i,i}\Delta(x)e_{i,i}=e_{i,i}(d(i,i)x-xd(i,i))e_{i,i}
$$
$$
=e_{i,i}d(i,i)(1-e_{i,i})xe_{i,i}+
e_{i,i}d(i,i)e_{i,i}xe_{i,i}-e_{i,i}x(1-e_{i,i})d(i,i)e_{i,i}-e_{i,i}xe_{i,i}d(i,i)e_{i,i}
$$
$$
=e_{i,i}a(j,i)(1-e_{i,i})xe_{i,i}+
e_{i,i}d(i,i)e_{i,i}xe_{i,i}-e_{i,i}x(1-e_{i,i})a(i,j)e_{i,i}-e_{i,i}xe_{i,i}d(i,i)e_{i,i}
$$
$$
=e_{i,i}(\sum_{k\neq l} a_{k,l})xe_{i,i}-e_{i,i}x(\sum_{k\neq l}
a_{k,l})e_{i,i}+
e_{i,i}d(i,i)e_{i,i}xe_{i,i}-e_{i,i}xe_{i,i}d(i,i)e_{i,i}
$$
$$
=e_{i,i}(\sum_{k\neq l} a_{k,l})xe_{i,i}-e_{i,i}x(\sum_{k\neq l}
a_{k,l})e_{i,i}+0
$$
$$
=e_{i,i}(\sum_{k\neq l} a_{k,l})xe_{i,i}-e_{i,i}x(\sum_{k\neq l}
a_{k,l})e_{i,i}+c_{i,i}e_{i,i}xe_{i,i}-e_{i,i}xc_{i,i}e_{i,i}
$$
$$
=e_{i,i}(\sum_{k\neq l} a_{k,l})xe_{i,i}-e_{i,i}x(\sum_{k\neq l}
a_{k,l})e_{i,i}+
$$
$$
e_{i,i}(\sum_{k=1}^na_{k,k})xe_{i,i}-e_{i,i}x(\sum_{k=1}^na_{k,k})e_{i,i}
$$
$$
=e_{i,i}(\sum_{kl=1}^n a_{k,l})xe_{i,i}-e_{i,i}x(\sum_{kl=1}^n
a_{k,l})e_{i,i}=e_{i,i}(\bar{a}x-x\bar{a})e_{i,i}.
$$

By the above conclusions we have
$$
\Delta(x)=\sum_{kl=1}^ne_{k,k}\Delta(x)e_{l,l}=\sum_{kl=1}^ne_{k,k}(\bar{a}x-x\bar{a})e_{l,l}=\bar{a}x-x\bar{a}
$$
for all $x\in M_n(\Re)$. The proof is complete.
\end{proof}

\bigskip

\section{On extensions of derivations and 2-local
derivations}

\begin{lemma} \label{5}
Let $M_2(\Re)$ be the ring of $2\times 2$ matrices over an
associative unital ring $\Re$ and let $D$ be a derivation
on the subring $\Re e_{1,1}$ and $\delta$ be a derivation on $\Re$
induced by $D$. Then the map
$$
\bar{D}
\left(\left[%
\begin{array}{lr}
  \lambda & \mu \\
  \nu & \eta \\
\end{array}%
\right]\right)=
\left[%
\begin{array}{lr}
  \delta(\lambda) & \delta(\mu)+\mu \\
  \delta(\nu)-\nu & \delta(\eta) \\
\end{array}%
\right], \lambda, \mu, \nu, \eta\in \Re,
$$
is a derivation.
\end{lemma}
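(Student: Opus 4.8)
The plan is to avoid a direct check of the Leibniz rule and instead recognize $\bar D$ as a sum of two maps, each of which is manifestly a derivation. Write a generic matrix as $x=\begin{bmatrix}\lambda&\mu\\\nu&\eta\end{bmatrix}$, let $\delta_*$ denote the map on $M_2(\Re)$ that applies $\delta$ to every entry, and recall that $e_{1,1}x-xe_{1,1}=\begin{bmatrix}0&\mu\\-\nu&0\end{bmatrix}$. Reading off the definition of $\bar D$, this gives the decomposition
$$
\bar D(x)=\delta_*(x)+\bigl(e_{1,1}x-xe_{1,1}\bigr).
$$
The second summand is the inner derivation $x\mapsto e_{1,1}x-xe_{1,1}$, so the whole lemma reduces to showing that $\delta_*$ is a derivation of $M_2(\Re)$, after which $\bar D$ is a derivation as a sum of two derivations.

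For $\delta_*$, additivity is immediate since $\delta$ is additive and $\delta_*$ acts entrywise. For the product rule I would use $(xy)_{i,k}=\sum_j x_{i,j}y_{j,k}$ together with the fact that $\delta$ is a derivation on $\Re$, so that
$$
\delta\bigl((xy)_{i,k}\bigr)=\sum_j\bigl(\delta(x_{i,j})y_{j,k}+x_{i,j}\delta(y_{j,k})\bigr),
$$
which is exactly the $(i,k)$ entry of $\delta_*(x)y+x\delta_*(y)$. Hence $\delta_*(xy)=\delta_*(x)y+x\delta_*(y)$ and $\delta_*$ is a derivation; here one only uses that $\delta$ is the derivation on $\Re$ induced by $D$ on $\Re e_{1,1}$.

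I expect there to be no genuine obstacle: the content of the lemma is entirely in the observation that the off-diagonal corrections $+\mu$ and $-\nu$ in $\bar D$ are precisely the commutator with $e_{1,1}$. If instead one verifies the four entries of $\bar D(xy)$ against $\bar D(x)y+x\bar D(y)$ by hand, the only delicate point is the bookkeeping of the cross terms generated by these corrections (for example the pair $\pm\mu\nu'$ in the $(1,1)$ entry), which must cancel; they do, and the decomposition above explains why. I would therefore present the argument through the splitting $\bar D(x)=\delta_*(x)+(e_{1,1}x-xe_{1,1})$, leaving the entrywise Leibniz computation for $\delta_*$ as the only routine verification.
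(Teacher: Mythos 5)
Your proposal is correct and is essentially the paper's own proof: the paper decomposes $\bar D=\bar\delta+d_U$ where $\bar\delta$ applies $\delta$ entrywise and $d_U$ is the inner derivation induced by $U=\mathrm{diag}(\tfrac12,-\tfrac12)$, which coincides with your $x\mapsto e_{1,1}x-xe_{1,1}$ since $U$ and $e_{1,1}$ differ by the central matrix $\tfrac12 I$. Your choice of $e_{1,1}$ even avoids using the invertibility of $2$, and you additionally spell out the entrywise Leibniz check for $\delta_*$ that the paper leaves implicit.
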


\begin{proof}
It is easy to check that for $a,b\in M_2(\Re)$ we have
$\bar{D}(ab)=\bar{D}(a)b+a\bar{D}(b)$. Indeed, the map $\bar{D}$
is equal to $\bar{\delta}+d_U$, where
$$
\bar{\delta}\left(\left[%
\begin{array}{lr}
\lambda & \mu \\
\nu & \eta \\
\end{array}%
\right]\right)=
\left[%
\begin{array}{lr}
\delta(\lambda) & \delta(\mu) \\
  \delta(\nu) & \delta(\eta) \\
\end{array}%
\right], \lambda, \mu, \nu, \eta\in \Re,
$$
and $d_U$ is the inner derivation induced by the matrix
$$
\left[
\begin{array}{lr} \frac{1}{2}  & 0 \\ 0 & -\frac{1}{2}
\end{array} \right].
$$
\end{proof}

Let $\bar{M}_m(\Re)$ be the subring of $M_n(\Re)$, $m<n$, generated
by the subsets
$$
\Re e_{i,j}, i,j=1,2,\dots, m
$$
in $M_n(\Re)$. It is clear that
$$
\bar{M}_m(\Re)\cong M_m(\Re).
$$

\begin{lemma} \label{6}
Let $\Re$ be an associative ring, and let $M_n(\Re)$ be the
ring of $n\times n$ matrices over $\Re$, $n>2$. Then every
derivation on $\bar{M}_2(\Re)$ can be extended to a derivation on
$M_n(\Re)$.
\end{lemma}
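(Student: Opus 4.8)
The plan is to reduce the extension problem on the large matrix ring $M_n(\Re)$ to the concrete $2\times 2$ situation already handled in Lemma \ref{5}, by localizing everything to the relevant $2\times 2$ corner. A derivation $D$ on $\bar{M}_2(\Re)$ carries essentially the same data as the map treated in Lemma \ref{5}: restricting $D$ to the diagonal subring $\Re e_{1,1}$ produces a derivation, and this in turn induces a derivation $\delta$ on $\Re$ in the canonical way. First I would record this induced $\delta$ explicitly, since it is the object that will be propagated across all of $M_n(\Re)$.

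The construction I would use is to define $\bar{D}$ on $M_n(\Re)$ by applying $\delta$ entrywise and then correcting by a single inner derivation. Concretely, on a general matrix $\sum_{k,l} a^{k,l} e_{k,l}$ set the entrywise part $\bar{\delta}\bigl(\sum a^{k,l}e_{k,l}\bigr)=\sum \delta(a^{k,l})e_{k,l}$, which is a derivation because $\delta$ is a derivation on $\Re$ and the matrix units multiply by the usual rule. Then I would add an inner derivation $d_U$ induced by a suitable diagonal matrix $U$ chosen so that on the $2\times 2$ corner $\bar{M}_2(\Re)$ the sum $\bar\delta+d_U$ reproduces exactly the off-diagonal shifts $\delta(\mu)+\mu$ and $\delta(\nu)-\nu$ of Lemma \ref{5}. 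Taking $U=\tfrac12 e_{1,1}-\tfrac12 e_{2,2}$ (padded by zeros in the remaining coordinates) does this on the $(1,1),(1,2),(2,1),(2,2)$ block, matching the matrix used in Lemma \ref{5}; since $2$ is invertible in $\Re$ this $U$ exists. The map $\bar D:=\bar\delta+d_U$ is then a derivation on all of $M_n(\Re)$, being a sum of the entrywise derivation and an inner one.

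It remains to verify that $\bar D$ genuinely extends the given $D$, i.e.\ that $\bar D$ and $D$ agree on every element of $\bar{M}_2(\Re)$. For this I would compute $\bar D$ on the generators $\Re e_{i,j}$ with $i,j\in\{1,2\}$ and compare with the values $D$ prescribes. On these generators $d_U$ contributes precisely the $\pm\mu$, $\pm\nu$ shifts on the off-diagonal entries and nothing on the diagonal (since $U$ commutes with $e_{1,1}$ and $e_{2,2}$), while $\bar\delta$ contributes the $\delta$-part; by Lemma \ref{5} this is exactly $D$ on the block. Because a derivation on $\bar M_2(\Re)\cong M_2(\Re)$ is determined by its induced $\delta$ together with the off-diagonal normalization, agreement on generators forces agreement on the whole subring.

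The main obstacle I anticipate is not the additivity or the matrix-unit bookkeeping but the consistency check at the diagonal corners: one must confirm that the derivation $D$ restricted to $\Re e_{2,2}$ induces the \emph{same} $\delta$ on $\Re$ as the one read off from $\Re e_{1,1}$, and more generally that $D$ is of the normalized form of Lemma \ref{5} in the first place (rather than having an arbitrary inner part). I would handle this by first subtracting off an inner derivation of $\bar M_2(\Re)$ to bring $D$ into the normalized shape of Lemma \ref{5}, extend that normalized piece as above, and then add back the inner derivation, which extends trivially to $M_n(\Re)$ because inner derivations of a subring given by a matrix in a corner are restrictions of inner derivations of the ambient ring.
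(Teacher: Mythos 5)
Your route is genuinely different from the paper's. The paper iterates Lemma \ref{5} through the identifications $M_4(\Re)\cong M_2(M_2(\Re))$, $M_8(\Re)\cong M_2(M_4(\Re))$, and so on, lands in $M_{2^k}(\Re)$ with $2^k\geq n$, and then compresses by the idempotent $e=\sum_{i=1}^n e_{i,i}$, citing \cite[Proposition 2.7]{NiPe2015} for the fact that $a\mapsto eD(a)e$ is a derivation of the corner $eM_{2^k}(\Re)e\cong M_n(\Re)$. You instead try to extend directly from $\bar{M}_2(\Re)$ to $M_n(\Re)$ by splitting $D$ into an entrywise part plus an inner part. That strategy can be made to work and is arguably cleaner, but as written it has a genuine gap.

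The gap: your opening paragraph takes for granted that restricting $D$ to $\Re e_{1,1}$ produces a derivation inducing a $\delta$ on $\Re$. That is false for a general derivation of $\bar{M}_2(\Re)$ --- for instance $D=\mathrm{ad}(e_{1,2})$ sends $e_{1,1}$ to $-e_{1,2}\notin\Re e_{1,1}$ --- and you acknowledge this only in your final paragraph, where the entire burden is shifted onto the claim that one can subtract an inner derivation of $\bar{M}_2(\Re)$ so that what remains is entrywise application of a single $\delta\in\mathrm{Der}(\Re)$ (up to the fixed $d_U$). That claim is precisely the structure theorem for derivations of matrix rings (every derivation of $M_2(\Re)$ is $\bar{\delta}$ plus an inner derivation, i.e.\ Morita invariance of outer derivations); it is neither proved in your plan nor available anywhere in the paper, and it is the load-bearing step rather than a ``consistency check.'' It is standard and fillable --- set $b=\sum_{i=1}^{2} D(e_{i,1})e_{1,i}$, verify that $D-\mathrm{ad}(b)$ annihilates all matrix units, hence maps $\Re e_{1,1}=e_{1,1}\bar{M}_2(\Re)e_{1,1}$ into itself and acts entrywise by the induced $\delta$ --- but until this is done the construction never gets off the ground. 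The remaining steps you describe (the entrywise map $\sum a^{k,l}e_{k,l}\mapsto\sum\delta(a^{k,l})e_{k,l}$ is a derivation of $M_n(\Re)$, and $\mathrm{ad}(b)$ for $b\in\bar{M}_2(\Re)\subset M_n(\Re)$ restricts on $\bar{M}_2(\Re)$ to the inner derivation you subtracted) are correct.
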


\begin{proof} By Lemma \ref{5} every derivation on $\bar{M}_2(\Re)$
can be extended to a derivation on $M_4(\Re)$. In its turn, every
derivation on $\bar{M}_4(\Re)$ can be extended to a derivation on
$M_8(\Re)$ and so on. Thus every derivation $\partial$ on
$\bar{M}_2(\Re)$ can be extended to a derivation $D$ on
$M_{2^k}(\Re)$. Suppose that $n\leq 2^k$. Let $e=\sum_{i=1}^n
e_{i,i}$ and
$$
\bar{D}(a)=eD(a)e, a\in \bar{M}_n(\Re).
$$
Then $\bar{D}:\bar{M}_n(\Re)\to \bar{M}_n(\Re)$ and $\bar{D}$ is a
derivation on $\bar{M}_n(\Re)$ by \cite[Proposition 2.7]{NiPe2015}.
At the same time, the derivation
$\bar{D}$ coincides with the derivation $\partial$ on
$\bar{M}_2(\Re)$. Therefore, $\bar{D}$ is an extension of
$\partial$ to $\bar{M}_n(\Re)$. Hence every derivation $\partial$
on $\bar{M}_2(\Re)$ can be extended to a derivation on $M_n(\Re)$.
\end{proof}

Thus, in the case of the ring $M_2(\Re)$ for any derivation on the
subring $\Re e_{1,1}$ we can take its extension onto the whole
$M_2(\Re)$ defined as in Lemma \ref{5}, which is also a
derivation.

\begin{theorem} \label{7}
Let $\Re$ be an associative ring, and let $M_n(\Re)$ be the
ring of $n\times n$ matrices over $\Re$, $n>2$. Then every
derivation on $\Re$ can be extended to a derivation on
$M_n(\Re)$.
\end{theorem}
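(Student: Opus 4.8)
The plan is to obtain the extension by chaining together the two extension lemmas already proved, so that essentially no new computation is required. First I would transport $\delta$ onto the corner subring $\Re e_{1,1}$: under the ring isomorphism $\Re \cong \Re e_{1,1}$, $\lambda \mapsto \lambda e_{1,1}$, the derivation $\delta$ corresponds to the derivation $D$ on $\Re e_{1,1}$ determined by $D(\lambda e_{1,1}) = \delta(\lambda) e_{1,1}$. This recasts the task as extending a derivation defined on $\Re e_{1,1}$ up to all of $M_n(\Re)$, which is exactly the situation addressed by Lemmas~\ref{5} and~\ref{6}.

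Next I would climb two steps. Regarding $\Re e_{1,1}$ as lying inside the subring $\bar{M}_2(\Re) \cong M_2(\Re)$, Lemma~\ref{5} produces an explicit derivation $\bar{D}$ on $\bar{M}_2(\Re)$ extending $D$, namely $\bar{D} = \bar{\delta} + d_U$, where $\bar{\delta}$ acts entrywise and $d_U$ is the inner derivation implemented by $\mathrm{diag}(\tfrac12, -\tfrac12)$ appearing in Lemma~\ref{5}. Because $d_U$ annihilates $\Re e_{1,1}$, the derivation $\bar{D}$ restricts to $D$ there. Then, since $n > 2$, Lemma~\ref{6} extends $\bar{D}$ to a derivation $\tilde{D}$ on the full ring $M_n(\Re)$.

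Finally I would verify that $\tilde{D}$ really extends $\delta$ by following the chain of restrictions. Lemma~\ref{6} guarantees that $\tilde{D}$ agrees with $\bar{D}$ on $\bar{M}_2(\Re)$, and $\bar{D}$ agrees with $D$ on $\Re e_{1,1}$; composing these identifications gives $\tilde{D}(\lambda e_{1,1}) = \delta(\lambda) e_{1,1}$ for every $\lambda \in \Re$. Reading this back through the isomorphism $\Re \cong \Re e_{1,1}$ shows that $\tilde{D}$ is the desired derivation extending $\delta$, completing the argument.

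Thus the theorem is a formal corollary of Lemmas~\ref{5} and~\ref{6}, with all the genuine content front-loaded into those two results: Lemma~\ref{5} supplies the explicit $2 \times 2$ formula, and Lemma~\ref{6} supplies the doubling-and-compression construction that pushes the extension from $\bar{M}_2(\Re)$ up to $M_{2^k}(\Re)$ and then compresses by $e = \sum_{i=1}^n e_{i,i}$ down to $M_n(\Re)$. Accordingly, I expect the only delicate point to be the bookkeeping of restrictions --- confirming that each of the two extension steps genuinely restricts to the map produced at the previous step, so that the composed chain collapses exactly to $\delta$ on $\Re e_{1,1}$, and in particular that the compression in Lemma~\ref{6} leaves the values on the $2 \times 2$ corner untouched. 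These compatibilities are precisely what the statements of the two lemmas assert, so no further obstacle arises.
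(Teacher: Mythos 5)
Your proposal is correct and follows essentially the same route as the paper: transport $\delta$ to $\Re e_{1,1}$, apply Lemma~\ref{5} to extend to $\bar{M}_2(\Re)$, then apply Lemma~\ref{6} to reach $M_n(\Re)$, checking that each step restricts to the previous one. The paper's own proof is exactly this two-step chaining, stated more tersely.
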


\begin{proof}
Let $\delta$ be an arbitrary derivation on $\Re$ and $D$ be the derivation on the subring $\Re e_{1,1}$
such that $\delta$ is induced by $D$. By Lemma \ref{5} every derivation on $\Re e_{1,1}$ has an extension
to a derivation on the matrix ring $\bar{M}_2(\Re)$ and every derivation on $\bar{M}_2(\Re)$
has an extension to a derivation on the matrix ring $M_n(\Re)$ by Lemma \ref{6}. Thus the statement of the
theorem is valid.
\end{proof}

\medskip

\begin{remark}
As for 2-local derivations, by \cite[Theorem 3.5]{AK4}
the lattice $P(\mathcal{M})$ of projections in a von Neumann algebra
$\mathcal{M}$ is not atomic if and only if the algebra $S(\mathcal{M})$ of
all measurable operators affiliated with $\mathcal{M}$ admits a 2-local derivation
which is not a derivation. Hence, if $\Re$ is the algebra $S(\mathcal{M})$ and
$P(\mathcal{M})$ is not atomic then by \cite[Theorem 4.3]{AK4} there are
2-local derivations on $\Re e_{1,1}$ which have no extension to a 2-local derivation on
$M_n(\Re)$, $n>1$.
\end{remark}

\medskip

We conclude the section by the following more general observation.

\begin{theorem} \label{10}
Let $\Delta :\Re\to \Re$ be a derivation on an
associative ring $\Re$. Suppose that $\Re$ is generated by its two
elements. Then, if $\Delta$ is a 2-local inner derivation then it is an inner
derivation.
\end{theorem}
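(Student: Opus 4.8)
The plan is to exploit the hypothesis that $\Re$ is two-generated in order to reduce the problem to a single application of the 2-local inner derivation property. Fix generators $x$, $y\in \Re$, so that $\Re$ is the smallest subring containing $x$ and $y$. Since $\Delta$ is a 2-local inner derivation, applied to this particular pair $(x,y)$ there exists an element $a\in \Re$ with $\Delta(x)=ax-xa$ and $\Delta(y)=ay-ya$. Let $D_a$ denote the inner derivation $D_a(z)=az-za$; the goal is then to prove $\Delta=D_a$ on all of $\Re$, which is exactly the assertion that $\Delta$ is inner.

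The key step is the elementary principle that two derivations which agree on a generating set must agree everywhere. Concretely, I would introduce the coincidence set
$$
S=\{z\in \Re : \Delta(z)=D_a(z)\}.
$$
Because both $\Delta$ and $D_a$ are additive, $S$ is closed under addition; because both satisfy the Leibniz rule, for $z_1,z_2\in S$ one computes $\Delta(z_1z_2)=\Delta(z_1)z_2+z_1\Delta(z_2)=D_a(z_1)z_2+z_1D_a(z_2)=D_a(z_1z_2)$, so that $S$ is closed under multiplication as well. Hence $S$ is a subring of $\Re$. By construction $x,y\in S$, and since $\Re$ is generated by $x$ and $y$ it follows that $S=\Re$, i.e. $\Delta(z)=az-za$ for every $z\in \Re$.

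One small point to verify is the behaviour on the identity in the unital case: any derivation annihilates ${\bf 1}$, since $\Delta({\bf 1})=\Delta({\bf 1})+\Delta({\bf 1})$ forces $\Delta({\bf 1})={\bf 0}$, and likewise $D_a({\bf 1})={\bf 0}$, so ${\bf 1}\in S$ automatically and no separate treatment is required. I do not expect any genuine obstacle here: the entire content of the theorem is the observation that, once the ring is two-generated, the 2-local hypothesis need only be invoked on the generating pair, after which the agreement-on-generators principle finishes the argument at once. This stands in sharp contrast with Theorem \ref{1}, where the difficulty arose precisely because $M_n(\Re)$ need not be finitely generated, forcing one to patch together the many locally implementing elements into a single $\bar a$ rather than simply reading it off.
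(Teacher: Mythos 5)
Your proposal is correct and follows essentially the same route as the paper: both invoke the 2-local hypothesis once, on the generating pair $(x,y)$, and then use the fact that $\Delta$ is assumed to be a derivation to propagate the identity $\Delta(z)=az-za$ from the generators to all of $\Re$. The paper carries this out by explicitly computing on monomials and polynomials in $x$ and $y$, whereas you package the same induction as the observation that the coincidence set of two derivations is a subring; this is only a cosmetic difference.
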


\begin{proof}
Let $x$, $y$ be generators of $\Re$, i.e.
$\Re=Alg(\{x,y\})$, where $Alg(\{x,y\})$ is an associative ring,
generated by the elements $x$, $y$ in $\Re$. We have that there
exists $d\in \Re$ such that
$$
\Delta(x)=[d,x], \Delta(y)=[d,y],
$$
where $[d,a]=da-ad$ for any $a\in \Re$.

Hence by the additivity of $\Delta$ we have
$$
\Delta(x+y)=\Delta(x)+\Delta(y)=[d,x+y].
$$
Since $\Delta$ is a derivation we have
$$
\Delta(xy)=\Delta(x)y+x\Delta(y)=[d,x]y+x[d,y]=[d,xy],
$$
$$
\Delta(x^2)=\Delta(x)x+x\Delta(x)=[d,x]x+x[d,x]=[d,x^2],
$$
$$
\Delta(y^2)=\Delta(y)y+y\Delta(y)=[d,y]y+y[d,y]=[d,y^2],
$$
Similarly
$$
\Delta(x^k)=[d,x^k], \Delta(y^m)=[d,y^m],
\Delta(x^ky^m)=[d,x^ky^m]
$$
and
$$
\Delta(x^ky^mx^l)=\Delta(x^ky^m)x^l+x^ky^m\Delta(x^l)=[d,x^ky^m]x^l+x^ky^m[d,x^l]=[d,x^ky^mx^l].
$$
Finally, for every polynomial $p(x_1,x_2,\dots,x_m)\in \Re$, where
$x_1,x_2,\dots,x_m\in \{x,y\}$ we have
$$
\Delta(p(x_1,x_2,\dots,x_m))=[d,p(x_1,x_2,\dots,x_m)],
$$
i.e. $\Delta$ is an inner derivation on $\Re$.
\end{proof}

\section{2-local derivations on Jordan rings.}

This section is devoted to derivations and 2-local derivations of Jordan rings.

Given subsets $B$ and $C$ of a Lie algebra with bracket $[\cdot, \cdot]$, let $[B,C]$ denote
the set of all finite sums of elements $[b,c]$, where $b\in B$ and $c\in C$.

Consider a Jordan ring $\Re$ and let $m=\{xM: x\in \Re\}$, where $xM$  denotes the multiplication
operator defined by $(xM)y:=x\cdot y$ for all $x$, $y\in \Re$.
Let $aut(\Re)$ denotes the Lie ring of all derivations of $\Re$.
The elements of the ideal $int(\Re):=[m,m]$
of $aut(\Re)$ are called inner derivations of $\Re$.

Let $\Delta$ be a 2-local derivation of the Jordan ring $\Re$.
$\Delta$ is called a 2-local inner derivation, if for each pair of elements
$x$, $y\in \Re$ there is an inner derivation $D$ of $\Re$ such that $\Delta(x)=D(x)$, $\Delta(y)=D(y)$.
Let $\mathcal{A}$ be an associative unital ring. Suppose $2=1+1$ is invertible in $\mathcal{A}$.
Then the set $\mathcal{A}$ with respect to the operations of
addition and Jordan multiplication
$$
a\cdot b=\frac{1}{2}(ab+ba), a, b\in \mathcal{A}
$$
is a Jordan ring. This Jordan ring we will denote by $(\mathcal{A}, \cdot)$. For any
elements $a_1$, $a_2$, $\dots,$ $a_m$, $b_1$, $b_2$, $\dots,$ $b_m\in \mathcal{A}$ the map
$$
D(x)=\sum_{k=1}^m D_{a_k,b_k}(x)=\sum_{k=1}^m(a_k\cdot (b_k\cdot x)-b_k\cdot (a_k\cdot x)), x\in \mathcal{A}
$$
is a derivation. Therefore {\it every inner derivation of the Jordan ring $(\mathcal{A}, \cdot)$ is an inner derivation of the
associative ring $\mathcal{A}$.} And also it is easy to see, that every inner derivation of the form $D_{ab-ba}(x)=(ab-ba)x-x(ab-ba)$,
$x\in \mathcal{A}$ is an inner derivation of the Jordan ring $(\mathcal{A}, \cdot)$. Indeed, we have
$$
D_{ab-ba}(x)=D_{\frac{1}{4}4(ab-ba)}(x)
$$
$$
=\frac{1}{4}[(4ab-b4a)x-x(4ab-b4a)]=((4a)\cdot (b\cdot x)-b\cdot ((4a)\cdot x)).
$$
Let $\Delta$ be a 2-local inner derivation of the Jordan ring $(\mathcal{A}, \cdot)$. Then for every pair of elements $x$, $y\in \mathcal{A}$
there is an inner derivation $D$ of $(\mathcal{A}, \cdot)$ such that $\Delta(x)=D(x)$, $\Delta(y)=D(y)$.
But $D$ is also an inner derivation of the associative ring $\mathcal{A}$. Hence, $\Delta$ is a 2-local inner derivation
of the associative ring $\mathcal{A}$. So, {\it every 2-local inner derivation of the Jordan ring $(\mathcal{A}, \cdot)$
is a 2-local inner derivation of the associative ring $\mathcal{A}$.}

Now, let $\mathcal{A}$ be an involutive unital ring and $\mathcal{A}_ {sa}$ be the set of all self-adjoint elements of
the ring $\mathcal{A}$. Suppose $2$ is invertible in $\mathcal{A}$. Then, it is known that $(\mathcal{A} _{sa}, \cdot)$ is a Jordan ring. We take $a_1$, $a_2$, $\dots,$ $a_m$, $b_1$, $b_2$, $\dots,$ $b_m\in\mathcal{A}_{sa}$ and the inner derivation
$$
D(x)=\sum_{k=1}^m D_{a_k,b_k}(x)=\sum_{k=1}^m(a_k\cdot (b_k\cdot x)-b_k\cdot (a_k\cdot x)), x\in \mathcal{A}_{sa}.
$$
Then
$$
\sum_{k=1}^m D_{a_k,b_k}(x)=\sum_{k=1}^m D_{\frac{1}{4}[a_k,b_k]}(x), x\in \mathcal{A}_{sa}.
$$
At the same time the map
$$
\sum_{k=1}^m D_{\frac{1}{4}[a_k,b_k]}(x), x\in \mathcal{A}
$$
is an inner derivation on the $*$-ring $\mathcal{A}$ and it is an extension of the derivation $D$.
Therefore {\it every inner derivation of the Jordan ring $(\mathcal{A}_{sa}, \cdot)$ is extended to
an inner derivation of the $*$-ring $\mathcal{A}$}. Such extension of derivations on
a special Jordan algebra are considered in \cite{UH}. As to a 2-local inner derivation, in this case
it is possible discuss extension of a 2-local inner derivation of the Jordan ring $(\mathcal{A}_{sa}, \cdot)$
to a 2-local inner derivation of the involutive ring $\mathcal{A}$. However, till now it was not possible
to carry out such extension without additional conditions. This problem shows the importance of the main result
in the following section.

\section{2-local derivations on the Jordan ring of matrices over a commutative ring}

Throughout of this section let $\Re$ be a commutative unital ring, $M_n(\Re)$ be the associative ring of $n\times n$ matrices
over $\Re$. Suppose $2$ is invertible in $\Re$. In this case the set
$$
H_n(\Re)=\{
\left[%
\begin{array}{cccc}
 a^{1,1} & a^{1,2} & \cdots & a^{1,n}\\
a^{2,1} & a^{2,2} & \cdots & a^{2,n}\\
\vdots & \vdots & \ddots & \vdots\\
a^{n,1} & a^{n,2} & \cdots & a^{n,n}\\
\end{array}%
\right]
\in M_n(\Re):
a^{i,j}=a^{j,i},
i,j=1,2,\dots ,n\}
$$
is a Jordan ring with respect to the addition and the Jordan multiplication
$$
a\cdot b=\frac{1}{2}(ab+ba), a, b\in H_n(\Re).
$$
This Jordan ring is denoted by $H_n(\Re)$.
Let $\bar{e}_{i,j}=e_{i,j}+e_{j,i}$ and $\bar{a}_{i,j}=\{e_{i,i}ae_{j,j}\}=(e_{i,i}a)e_{j,j}+e_{i,i}(ae_{j,j})$
for every $a\in H_n(\Re)$ and distinct $i$, $j$ in $\{1,2,\dots ,n\}$.

\begin{lemma}  \label{3.0}
Let $D=\sum_{k=1}^mD_{a_k,b_k}$ be an inner derivation on $H_n(\Re)$,
generated by $a_1$, $a_2$, $\dots,$ $a_m$, $b_1$, $b_2$, $\dots,$ $b_m\in H_n(\Re)$.
Then
$$
\sum_{k=1}^m [a_k,b_k]^{i,i}=0, i=1,2,\dots,n.
$$
\end{lemma}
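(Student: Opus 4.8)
The plan is to show that each individual commutator $[a_k,b_k]=a_kb_k-b_ka_k$ already has vanishing diagonal, after which the asserted equality follows at once by summation over $k$. The conceptual point is that the associative commutator of two \emph{symmetric} matrices over a \emph{commutative} ring is skew-symmetric, and a skew-symmetric matrix has zero diagonal as soon as $2$ is invertible.

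First I would record the transpose-reversal identity in $M_n(\Re)$. Writing $a^{T}$ for the matrix with $(a^{T})^{i,j}=a^{j,i}$, a direct computation of entries gives, for any $a,b\in M_n(\Re)$,
$$
\bigl((ab)^{T}\bigr)^{i,j}=(ab)^{j,i}=\sum_{k=1}^n a^{j,k}b^{k,i}=\sum_{k=1}^n b^{k,i}a^{j,k}=(b^{T}a^{T})^{i,j},
$$
where the third equality uses that $\Re$ is commutative. Thus $(ab)^{T}=b^{T}a^{T}$. Since each $a_k,b_k\in H_n(\Re)$ is symmetric, i.e. $a_k^{T}=a_k$ and $b_k^{T}=b_k$, this yields $(a_kb_k)^{T}=b_ka_k$ and hence
$$
[a_k,b_k]^{T}=(a_kb_k-b_ka_k)^{T}=b_ka_k-a_kb_k=-[a_k,b_k],
$$
so that $[a_k,b_k]$ is skew-symmetric.

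Now comparing the $(i,i)$ entries on the two sides of $[a_k,b_k]^{T}=-[a_k,b_k]$ gives $[a_k,b_k]^{i,i}=-[a_k,b_k]^{i,i}$, that is $2\,[a_k,b_k]^{i,i}=0$. Because $2$ is invertible in $\Re$, we conclude $[a_k,b_k]^{i,i}=0$ for every $i$ and every $k$, and therefore $\sum_{k=1}^m[a_k,b_k]^{i,i}=0$ as claimed. Alternatively, one may verify the vanishing straight from the entries: using symmetry and commutativity,
$$
[a_k,b_k]^{i,i}=\sum_{l=1}^n\bigl(a_k^{i,l}b_k^{l,i}-b_k^{i,l}a_k^{l,i}\bigr)=\sum_{l=1}^n\bigl(a_k^{i,l}b_k^{i,l}-b_k^{i,l}a_k^{i,l}\bigr)=0,
$$
which displays the role of commutativity even more transparently.

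There is no genuine obstacle here: the only places where the hypotheses enter essentially are the transpose-reversal identity $(ab)^{T}=b^{T}a^{T}$, which relies on commutativity of $\Re$, and the invertibility of $2$, needed to pass from $2c^{i,i}=0$ to $c^{i,i}=0$. The lemma is thus a structural fact about symmetric matrices rather than a statement about derivations, and I expect it to serve as a normalization input for the subsequent analysis of $2$-local inner derivations on $H_n(\Re)$, playing the role that the diagonal bookkeeping of Lemmas \ref{21}--\ref{3} plays in the associative case.
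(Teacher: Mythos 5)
Your proof is correct, and your ``alternative'' verification at the end --- computing $[a_k,b_k]^{i,i}=\sum_{l}\bigl(a_k^{i,l}b_k^{l,i}-b_k^{i,l}a_k^{l,i}\bigr)$ and cancelling via symmetry and commutativity of $\Re$ --- is exactly the paper's proof. Your primary route (commutators of symmetric matrices over a commutative ring are skew-symmetric, hence have zero diagonal when $2$ is invertible) is an equivalent repackaging; the only substantive difference is that it invokes the invertibility of $2$, which the direct entry computation shows is not needed for this lemma (though it is of course available in the paper's standing hypotheses). If anything, the direct computation is preferable precisely because it isolates symmetry and commutativity as the only ingredients; your skew-symmetry framing is conceptually cleaner but marginally less economical in hypotheses. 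Either way, the statement and its role as a normalization input for the later analysis of $2$-local inner derivations on $H_n(\Re)$ are exactly as you describe.
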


\begin{proof} Indeed, let $i$ be an arbitrary index in $\{1,2,\dots ,n\}$. Then
for every $k\in\{1,2,\dots ,m\}$ we have
$$
[a_k,b_k]^{i,i}=\sum_{l=1}^n a_k^{i,l}b_k^{l.i}-\sum_{l=1}^n b_k^{i,l}a_k^{l.i}=0
$$
since $a_k$ and $b_k$ are symmetric matrices. This completes the proof.
\end{proof}

\begin{lemma}  \label{3.1}
Let $\Delta$ be a 2-local derivation
on $H_2(\Re)$ and let $\sum_{k=1}^m D_{a_k,b_k}$, $\sum_{k=1}^m D_{c_k,d_k}$ be inner derivations on $H_2(\Re)$
such that
$$
\Delta (e_{1,1})=\sum_{k=1}^m D_{a_k,b_k}(e_{1,1})=\sum_{k=1}^mD_{c_k,d_k}(e_{1,1}).
$$
Then
$$
e_{1,1}(\sum_{k=1}^m [a_k,b_k])e_{2,2}=e_{1,1}(\sum_{k=1}^m [c_k,d_k])e_{2,2},
$$
$$
e_{2,2}((\sum_{k=1}^m [a_k,b_k]))e_{1,1}=e_{2,2}((\sum_{k=1}^m [c_k,d_k]))e_{1,1}.
$$
\end{lemma}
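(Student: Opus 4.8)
The plan is to reduce the Jordan computation to an associative one by means of the key identity established in the discussion preceding this section. Writing $u=\sum_{k=1}^m[a_k,b_k]$ and $v=\sum_{k=1}^m[c_k,d_k]$, both elements of $M_2(\Re)$, I recall that $\sum_{k=1}^m D_{a_k,b_k}(x)=\sum_{k=1}^m D_{\frac{1}{4}[a_k,b_k]}(x)$, so that on all of $H_2(\Re)$ the two given Jordan inner derivations act as the associative commutator derivations $x\mapsto \frac{1}{4}(ux-xu)$ and $x\mapsto \frac{1}{4}(vx-xv)$. In particular the hypothesis $\sum_k D_{a_k,b_k}(e_{1,1})=\sum_k D_{c_k,d_k}(e_{1,1})$ reads $\frac{1}{4}(ue_{1,1}-e_{1,1}u)=\frac{1}{4}(ve_{1,1}-e_{1,1}v)$, and since $2$, hence $4$, is invertible in $\Re$, this yields the single associative identity
$$
ue_{1,1}-e_{1,1}u=ve_{1,1}-e_{1,1}v.
$$

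Next I would extract the two off-diagonal corners by sandwiching this identity between the orthogonal idempotents $e_{1,1}$ and $e_{2,2}$. Using $e_{1,1}e_{2,2}=e_{2,2}e_{1,1}=0$ and $e_{i,i}^2=e_{i,i}$, one checks the elementary formulas
$$
e_{1,1}(we_{1,1}-e_{1,1}w)e_{2,2}=-e_{1,1}we_{2,2},\qquad
e_{2,2}(we_{1,1}-e_{1,1}w)e_{1,1}=e_{2,2}we_{1,1},
$$
valid for every $w\in M_2(\Re)$. Applying the first formula with $w=u$ and with $w=v$ and comparing through the displayed identity gives $e_{1,1}ue_{2,2}=e_{1,1}ve_{2,2}$; applying the second likewise gives $e_{2,2}ue_{1,1}=e_{2,2}ve_{1,1}$. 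These are precisely the two asserted equalities.

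The substance of the argument lies almost entirely in the first step: once one observes that a Jordan inner derivation assembled from the $D_{a_k,b_k}$ collapses to the associative inner derivation induced by $\frac{1}{4}\sum_k[a_k,b_k]$, the remainder is a routine Peirce-corner extraction in the $2\times2$ matrix ring. I do not expect a genuine obstacle here; the only points requiring care are keeping track of which corner survives each multiplication and noting that only the $(1,2)$ and $(2,1)$ entries of $u-v$ are controlled by the hypothesis — the diagonal entries are not, which is exactly why the conclusion is confined to the off-diagonal corners. The invertibility of $2$ enters the proof only once, to cancel the factor $\frac{1}{4}$.
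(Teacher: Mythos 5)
Your proof is correct and follows essentially the same route as the paper's: both identify $\sum_k D_{a_k,b_k}$ with the associative commutator derivation induced by $\frac{1}{4}\sum_k[a_k,b_k]$, cancel the factor $\frac{1}{4}$, and extract the two off-diagonal Peirce corners by multiplying the resulting identity on either side by $e_{1,1}$ and $e_{2,2}$. No issues.
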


\begin{proof}
We have
$$
\sum_{k=1}^m D_{a_k,b_k}(e_{1,1})=D_{\frac{1}{4}\sum_{k=1}^m [a_k,b_k]}(e_{1,1}),
$$
$$
\sum_{k=1}^m D_{c_k,d_k}(e_{1,1})=D_{\frac{1}{4}\sum_{k=1}^m [c_k,d_k]}(e_{1,1}).
$$
Hence
$$
D_{\frac{1}{4}\sum_{k=1}^m [a_k,b_k]}(e_{1,1})=D_{\frac{1}{4}\sum_{k=1}^m [c_k,d_k]}(e_{1,1}).
$$
Therefore from
$$
D_{\frac{1}{4}\sum_{k=1}^m [a_k,b_k]}(e_{1,1})=\frac{1}{4}(\sum_{k=1}^m [a_k,b_k])e_{1,1}-e_{1,1}\frac{1}{4}(\sum_{k=1}^m [a_k,b_k])
$$
$$
=\frac{1}{4}(\sum_{k=1}^m [c_k,d_k])e_{1,1}-e_{1,1}\frac{1}{4}(\sum_{k=1}^m [c_k,d_k])=D_{\frac{1}{4}\sum_{k=1}^m [c_k,d_k]}(e_{1,1})
$$
it follows that
$$
(\sum_{k=1}^m [a_k,b_k])e_{1,1}-e_{1,1}(\sum_{k=1}^m [a_k,b_k])=(\sum_{k=1}^m [c_k,d_k])e_{1,1}-e_{1,1}(\sum_{k=1}^m [c_k,d_k])
$$
and
$$
e_{2,2}(\sum_{k=1}^m [a_k,b_k])e_{1,1}=e_{2,2}(\sum_{k=1}^m [c_k,d_k])e_{1,1},
$$
$$
e_{1,1}(\sum_{k=1}^m [a_k,b_k])e_{2,2}=e_{1,1}(\sum_{k=1}^m [c_k,d_k])e_{2,2}.
$$
This completes the proof.
\end{proof}

\begin{theorem}  \label{3.3}
Every 2-local inner derivation on $H_2(\Re)$ is an inner derivation.
\end{theorem}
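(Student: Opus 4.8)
The plan is to reduce everything to the rigidity statement that an inner derivation of $H_2(\Re)$ is completely determined by its value on the single idempotent $e_{1,1}$, and then to exploit $2$-locality in one short step. Recall from the discussion preceding Lemma \ref{3.0} that every inner derivation of $H_2(\Re)$ can be written as $\sum_{k=1}^m D_{a_k,b_k}=D_{\frac{1}{4}W}$ with $W=\sum_{k=1}^m[a_k,b_k]$, and that, because the $a_k,b_k$ are symmetric, $W$ is antisymmetric; in the $2\times 2$ case this forces $W=s(e_{1,2}-e_{2,1})$ for a single $s\in\Re$.

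First I would record the uniqueness principle. Suppose $\sum_k D_{a_k,b_k}$ and $\sum_k D_{c_k,d_k}$ are two inner derivations that agree on $e_{1,1}$, with associated matrices $W=\sum_k[a_k,b_k]$ and $W'=\sum_k[c_k,d_k]$. By Lemma \ref{3.1} their off-diagonal blocks coincide, $e_{1,1}We_{2,2}=e_{1,1}W'e_{2,2}$ and $e_{2,2}We_{1,1}=e_{2,2}W'e_{1,1}$, while by Lemma \ref{3.0} all their diagonal entries vanish, $W^{i,i}=0={W'}^{i,i}$. Since a $2\times 2$ matrix is determined by its two off-diagonal entries once the diagonal is known to be zero, we get $W=W'$, hence $D_{\frac{1}{4}W}=D_{\frac{1}{4}W'}$; that is, the two inner derivations are equal. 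In short, an inner derivation of $H_2(\Re)$ agreeing with a prescribed value on $e_{1,1}$ is unique.

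Next I would fix the candidate. Let $\Delta$ be a $2$-local inner derivation. Applying the defining property to the pair $(e_{1,1},e_{1,1})$ produces an inner derivation $D^*$ with $\Delta(e_{1,1})=D^*(e_{1,1})$. I claim $\Delta=D^*$. Indeed, let $x\in H_2(\Re)$ be arbitrary and apply $2$-locality to the pair $(e_{1,1},x)$: there is an inner derivation $D_x$ with $\Delta(e_{1,1})=D_x(e_{1,1})$ and $\Delta(x)=D_x(x)$. Then $D_x$ and $D^*$ are inner derivations agreeing with $\Delta$ on $e_{1,1}$, so by the uniqueness principle above $D_x=D^*$. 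Consequently $\Delta(x)=D_x(x)=D^*(x)$, and since $x$ was arbitrary, $\Delta=D^*$ is an inner derivation.

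The only real content is the rigidity encapsulated in the two lemmas; once it is in hand, the passage from $2$-local inner to inner is immediate and, notably, uses no additivity of $\Delta$. The step I expect to be the crux---already handled by Lemmas \ref{3.0} and \ref{3.1}---is the observation that for symmetric generators the commutator $\sum_k[a_k,b_k]$ is antisymmetric, so that its vanishing diagonal together with the two matched off-diagonal blocks pins it down completely; this is exactly what collapses the whole family of inner derivations agreeing on $e_{1,1}$ down to a single one.
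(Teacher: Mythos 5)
Your proof is correct and follows essentially the same strategy as the paper: anchor the candidate derivation at $e_{1,1}$, apply $2$-locality to the pair $(e_{1,1},x)$, and use Lemma \ref{3.0} (vanishing diagonal of $\sum_k[a_k,b_k]$) together with Lemma \ref{3.1} (matching off-diagonal blocks) to identify the two inner derivations. The only difference is cosmetic: you conclude that the associated matrices $W$ and $W'$ coincide outright, whereas the paper verifies $dx-xd=ax-xa$ Peirce-block by Peirce-block, which amounts to the same computation.
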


\begin{proof}
Let $\Delta$ be an arbitrary 2-local inner derivation on $H_2(\Re)$ and
$\sum_{k=1}^m D_{a_k,b_k}$ be an inner derivation on $H_2(\Re)$ such that
$$
\Delta (e_{1,1})=\sum_{k=1}^m D_{a_k,b_k}(e_{1,1}).
$$
We prove that
$$
\Delta (x)=D_{\frac{1}{4}\sum_{k=1}^m [a_k,b_k]}(x), x\in H_2(\Re).
$$

Let $x\in H_2(\Re)$ and $c_1$, $c_2$, $\dots,$ $c_m$, $d_1$, $d_2$, $\dots,$ $d_m$ be elements in $H_{2}(\Re)$ such that
$$
\Delta (e_{1,1})=\sum_{k=1}^m D_{c_k,d_k}(e_{1,1}), \Delta (x)=\sum_{k=1}^m D_{c_k,d_k}(x).
$$
Let $a=\frac{1}{4}\sum_{k=1}^m [a_k,b_k]$ and $d=\frac{1}{4}\sum_{k=1}^m [c_k,d_k]$.
Then by Lemma \ref{3.1} we have
$$
\Delta (x)=\sum_{k=1}^m D_{c_k,d_k}(x)=D_{\frac{1}{4}\sum_{k=1}^m [c_k,d_k]}(x)=dx-xd
$$
$$
=e_{1,1}de_{1,1}x+e_{1,1}de_{2,2}x+e_{2,2}de_{1,1}x+e_{2,2}de_{2,2}x
$$
$$
-xe_{1,1}de_{1,1}-xe_{1,1}de_{2,2}-xe_{2,2}de_{1,1}-xe_{2,2}de_{2,2}
$$
$$
=e_{1,1}de_{1,1}x+e_{1,1}ae_{2,2}x+e_{2,2}ae_{1,1}x+e_{2,2}de_{2,2}x
$$
$$
-xe_{1,1}de_{1,1}-xe_{1,1}ae_{2,2}-xe_{2,2}ae_{1,1}-xe_{2,2}de_{2,2}.
$$
Here we have
$$
e_{1,1}de_{1,1}x+e_{2,2}de_{2,2}x-xe_{1,1}de_{1,1}-xe_{2,2}de_{2,2}
$$
$$
=\sum_{k=1}^2(e_{1,1}de_{1,1}e_{1,1}xe_{k,k}-e_{k,k}xe_{1,1}e_{1,1}de_{1,1})
$$
$$
+\sum_{k=1}^2(e_{2,2}de_{2,2}e_{2,2}xe_{k,k}-e_{k,k}xe_{2,2}e_{2,2}de_{2,2})
$$
$$
=\sum_{k=1}^2(d^{1,1}x^{1,k}e_{1,k}-x^{k,1}d^{1,1}e_{k,1})+
\sum_{k=1}^2(d^{2,2}x^{2,k}e_{2,k}-x^{k,2}d^{2,2}e_{k,2})
$$
$$
=d^{1,1}x^{1,2}e_{1,2}-x^{2,1}d^{1,1}e_{2,1}+
d^{2,2}x^{2,1}e_{2,1}-x^{1,2}d^{2,2}e_{1,2}
$$
$$
=(d^{1,1}-d^{2,2})x^{1,2}e_{1,2}+
(d^{2,2}-d^{1,1})x^{2,1}e_{2,1}=0
$$
$$
=(a^{1,1}-a^{2,2})x^{1,2}e_{1,2}+
(a^{2,2}-a^{1,1})x^{2,1}e_{2,1}=\dots
$$
$$
=e_{1,1}ae_{1,1}x+e_{2,2}ae_{2,2}x-xe_{1,1}ae_{1,1}-xe_{2,2}ae_{2,2}
$$
by Lemma \ref{3.0}. Hence
$$
\Delta (x)=e_{1,1}de_{1,1}x+e_{1,1}ae_{2,2}x+e_{2,2}ae_{1,1}x+e_{2,2}de_{2,2}x
$$
$$
-xe_{1,1}de_{1,1}-xe_{1,1}ae_{2,2}-xe_{2,2}ae_{1,1}-xe_{2,2}de_{2,2}
$$
$$
=e_{1,1}ae_{1,1}x+e_{1,1}ae_{2,2}x+e_{2,2}ae_{1,1}x+e_{2,2}ae_{2,2}x
$$
$$
-xe_{1,1}ae_{1,1}-xe_{1,1}ae_{2,2}-xe_{2,2}ae_{1,1}-xe_{2,2}ae_{2,2}=D(x).
$$

Hence
$$
\Delta (x)=\sum_{k=1}^m D_{a_k,b_k}(x)=\sum_{k=1}^m (a_k(b_kx)-b_k(a_kx)), x\in H_2(\Re). \,\,\,\,\,\,\,\,\,\,(1)
$$
From (1) it follows that $\Delta$ is linear and
$$
\Delta (xy)=\Delta (x)y+x\Delta (y)
$$
for all elements $x$, $y\in H_2(\Re)$ with respect to the Jordan multiplication.
Hence $\Delta$ is an inner derivation. The proof is complete.
\end{proof}

Now, we prove Theorem \ref{3.3} for $H_n(\Re)$ with an arbitrary natural number $n>1$.
Throughout the rest part of the paper let $\Delta$ be an arbitrary but fixed 2-local inner derivation
on $H_n(\Re)$. Then we have the following lemma.

\begin{lemma}  \label{3.4}
Let $i$, $j$ be arbitrary distinct indices, $e=e_{i,i}+e_{j,j}$ and
$\Delta(e_{i,i})=\sum_{k=1}^m D_{a_k,b_k}(e_{i,i})$ for some $a_1$, $a_2$, $\dots,$ $a_m$, $b_1$, $b_2$, $\dots,$ $b_m$ in $H_n(\Re)$. Then
the mapping
$$
\Delta_{i,j}(x)=e\Delta(x)e, x\in eH_n(\Re)e
$$
is a derivation on $eH_n(\Re)e$ and
$$
\Delta_{i,j}(x)=\frac{1}{4}e(\sum_{k=1}^m [a_k,b_k])ex-\frac{1}{4}xe(\sum_{k=1}^m [a_k,b_k])e, x\in eH_n(\Re)e.
$$
\end{lemma}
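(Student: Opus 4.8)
The plan is to recognize the corner $eH_n(\Re)e$ as a copy of $H_2(\Re)$ and to reduce the whole statement to Theorem \ref{3.3}. With $e=e_{i,i}+e_{j,j}$, the set $eH_n(\Re)e$ consists exactly of the symmetric matrices supported on the rows and columns indexed by $i$ and $j$, so it is a Jordan subring of $H_n(\Re)$ isomorphic to $H_2(\Re)$, with $e_{i,i}$ and $e_{j,j}$ playing the role of the two diagonal idempotents. Since $\Delta$ maps $H_n(\Re)$ into itself, the compression $\Delta_{i,j}(x)=e\Delta(x)e$ is a well-defined map of this corner into itself, and it suffices to show it is the inner derivation of the corner with the stated generator.

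First I would verify that $\Delta_{i,j}$ is a 2-local inner derivation on $eH_n(\Re)e$. Given $x,y\in eH_n(\Re)e$, 2-locality of $\Delta$ produces an inner derivation $\sum_k D_{c_k,d_k}$ of $H_n(\Re)$ with $\Delta(x)=\sum_k D_{c_k,d_k}(x)$ and $\Delta(y)=\sum_k D_{c_k,d_k}(y)$; by the identity $\sum_k D_{c_k,d_k}=D_{\frac{1}{4}\sum_k[c_k,d_k]}$ recorded in Section 4, this derivation is $z\mapsto vz-zv$ with the skew-symmetric element $v=\frac{1}{4}\sum_k[c_k,d_k]$. Using $ex=xe=x$ for $x$ in the corner, a short computation gives $e(vx-xv)e=(eve)x-x(eve)$, where $eve$ is again skew-symmetric and lies in the corner. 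Hence on the pair $x,y$ the map $\Delta_{i,j}$ agrees with the commutator derivation $z\mapsto(eve)z-z(eve)$, which, by the same Section 4 dictionary between commutator inner derivations and Jordan inner derivations, is an inner derivation of the Jordan ring $eH_n(\Re)e$. This is precisely the 2-local inner derivation property.

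Applying Theorem \ref{3.3} to the Jordan ring $eH_n(\Re)e\cong H_2(\Re)$ then shows $\Delta_{i,j}$ is an inner derivation, and in particular a derivation, which is the first assertion. To identify the generator, set $a=\frac{1}{4}\sum_k[a_k,b_k]$. The hypothesis $\Delta(e_{i,i})=\sum_k D_{a_k,b_k}(e_{i,i})$ gives $\Delta(e_{i,i})=ae_{i,i}-e_{i,i}a$, and compressing by $e$ yields $\Delta_{i,j}(e_{i,i})=(eae)e_{i,i}-e_{i,i}(eae)$, where $eae$ is skew-symmetric in the corner. Thus $\Delta_{i,j}$ and the inner derivation $z\mapsto(eae)z-z(eae)$ of the corner take the same value at $e_{i,i}$. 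Since a skew-symmetric generator of an inner derivation of $H_2(\Re)$ is uniquely determined by the value at $e_{i,i}$ (a scalar-matrix ambiguity is excluded because $2$ is invertible), the two derivations coincide, so $\Delta_{i,j}(x)=(eae)x-x(eae)$ for all $x\in eH_n(\Re)e$. As $x=exe$, this is exactly $\frac{1}{4}e(\sum_k[a_k,b_k])ex-\frac{1}{4}xe(\sum_k[a_k,b_k])e$, as claimed.

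The step I expect to be the main obstacle is the second paragraph: confirming that $\Delta_{i,j}$ genuinely is a 2-local inner derivation of the corner. The point is that the inner derivation realizing $\Delta$ on a given pair is a derivation of the full ring $H_n(\Re)$, and one must check that its compression to $eH_n(\Re)e$ remains an honest inner derivation of that corner. This rests on the interplay $ex=xe=x$, on $eve$ staying skew-symmetric, and on the correspondence between commutator inner derivations and Jordan inner derivations established in Section 4; everything else is routine bookkeeping with the matrix idempotents.
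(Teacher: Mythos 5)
Your proof is correct, but it is packaged differently from the paper's. The paper proves the lemma by observing that the compression $\Delta_{i,j}$ is a 2-local derivation, establishing the analogue of Lemma \ref{3.1} for the blocks $e_{i,i}(\cdot)e_{j,j}$, and then literally repeating the computation from the proof of Theorem \ref{3.3} with $e_{i,i},e_{j,j}$ in place of $e_{1,1},e_{2,2}$. You instead invoke Theorem \ref{3.3} as a black box on the corner $eH_n(\Re)e\cong H_2(\Re)$ and then pin down the generator by a uniqueness argument at $e_{i,i}$; this is more modular and avoids redoing the $2\times 2$ computation, at the price of two verifications you flag as routine but which should be written out. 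First, to see that $z\mapsto (eve)z-z(eve)$ is an inner derivation of the \emph{Jordan} corner, note that the skew-symmetric element $eve=\lambda(e_{i,j}-e_{j,i})$ equals $\frac{1}{4}[4\lambda e_{i,i},\,e_{i,j}+e_{j,i}]$, so $D_{eve}=D_{4\lambda e_{i,i},\,\bar{e}_{i,j}}$ with both generators symmetric and lying in the corner; the Section 4 correspondence as literally stated only gives the opposite direction for $\mathcal{A}_{sa}$, so this one-line identity is needed. Second, for the uniqueness step, a skew-symmetric element $w=\mu(e_{i,j}-e_{j,i})$ of the corner (its diagonal vanishes because $2$ is invertible) satisfies $we_{i,i}-e_{i,i}w=-\mu(e_{i,j}+e_{j,i})$, so $w$ is indeed determined by its commutator with $e_{i,i}$, which justifies identifying the generator of $\Delta_{i,j}$ with $eae$ where $a=\frac{1}{4}\sum_{k=1}^m[a_k,b_k]$. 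With these two computations supplied, your argument is complete and yields exactly the stated formula.
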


\begin{proof}
Similar to proof of \cite[Proposition 2.7]{NiPe2015} it can be proved that
$\Delta_{i,j}$ is a 2-local derivation.
Let $x$ be an arbitrary element in $eH_n(\Re)e$ and
$$
\Delta(e_{i,i})=\sum_{k=1}^m D_{c_k,d_k}(e_{i,i}), \Delta(x)=\sum_{k=1}^m D_{c_k,d_k}(x).
$$
Similar to Lemma \ref{3.1} we have
$$
e_{i,i}[a,b]e_{j,j}=e_{i,i}[c,d]e_{j,j}, e_{j,j}[a,b]e_{i,i}=e_{j,j}[c,d]e_{i,i}.
$$
The rest part of the proof repeats the proof of Theorem \ref{3.3} for
$e_{i,i}$ and $e_{j,j}$ instead of $e_{1,1}$ and $e_{2,2}$ respectively.
The proof is complete.
\end{proof}

Let $a_1$, $a_2$, $\dots,$ $a_m$, $b_1$, $b_2$, $\dots,$ $b_m$, $d(ii)$ be elements in $H_n(\Re)$ such that
$$
\Delta(e_{i,i})=\sum_{k=1}^m D_{a_k,b_k}(e_{i,i}), d(ii)=\frac{1}{4}\sum_{k=1}^m [a_k,b_k].
$$
Under this notations we have the following lemma.

\begin{lemma} \label{3.41}
For each pair $i$, $j$ of indices the following equalities are valid
$$
e_{i,i}d(ii)e_{i,i}=e_{j,j}d(ii)e_{j,j}, e_{i,i}d(ii)e_{i,i}=e_{j,j}d(jj)e_{j,j},
$$
$$
e_{i,i}d(ii)e_{j,j}=e_{i,i}d(jj)e_{j,j}, e_{j,j}d(ii)e_{i,i}=e_{j,j}d(jj)e_{i,i},
$$
and for every $k\neq i,j$
$$
e_{i,i}d(ii)e_{k,k}=e_{i,i}d(jj)e_{k,k}, e_{k,k}d(ii)e_{j,j}=e_{k,k}d(jj)e_{j,j},
$$
$$
e_{j,j}d(ii)e_{k,k}=e_{j,j}d(jj)e_{k,k}, e_{k,k}d(ii)e_{j,j}=e_{k,k}d(jj)e_{j,j}.
$$
\end{lemma}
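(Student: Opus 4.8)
The plan is to exploit a single clean identity. Because the Jordan inner derivation $D_{a,b}$ coincides as a map with the associative commutator derivation $D^{\mathrm{assoc}}_{\frac14[a,b]}$, the defining relation $\Delta(e_{l,l})=\sum_k D_{a_k,b_k}(e_{l,l})$ rewrites as $\Delta(e_{l,l})=d(ll)\,e_{l,l}-e_{l,l}\,d(ll)$, an ordinary commutator with $d(ll)=\frac14\sum_k[a_k,b_k]$. First I would record two structural facts about each $d(ll)$. Since every $[a_k,b_k]$ is a commutator of symmetric matrices it is antisymmetric, and by Lemma \ref{3.0} its diagonal vanishes; hence $d(ll)^{p,p}=0$ for all $p$, so $e_{p,p}d(ll)e_{p,p}=0$. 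This already disposes of the first line of displayed equalities: each side there is separately zero.

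The heart of the matter is the observation that the equation $\Delta(e_{l,l})=[d(ll),e_{l,l}]$ determines $d(ll)$ only along its $l$-th row and $l$-th column. I would isolate this as a short sublemma: for any element $d$ and $\delta=d(ll)-d$, the relation $[\delta,e_{l,l}]=0$ means $\delta e_{l,l}=e_{l,l}\delta$, and expanding in matrix units forces $\delta^{l,m}=0$ and $\delta^{m,l}=0$ for every $m\neq l$. In words, if $d$ is any element with $\Delta(e_{l,l})=[d,e_{l,l}]$, then $d(ll)$ and $d$ share all off-diagonal entries throughout row $l$ and column $l$.

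With this in hand the off-diagonal equalities follow by feeding the $2$-local hypothesis the correct pair of matrix units. For the block $(i,j)$ I would apply the $2$-local property to the pair $e_{i,i},e_{j,j}$: there is an inner derivation agreeing with $\Delta$ at both, so putting $d=\frac14\sum_k[c_k,d_k]$ for the representing data we get $\Delta(e_{i,i})=[d,e_{i,i}]$ and $\Delta(e_{j,j})=[d,e_{j,j}]$. By the sublemma $d(ii)$ agrees with $d$ on row $i$ and column $i$, while $d(jj)$ agrees with $d$ on row $j$ and column $j$. The $(i,j)$ entry lies in row $i$ and in column $j$, so $d(ii)^{i,j}=d^{i,j}=d(jj)^{i,j}$, which is $e_{i,i}d(ii)e_{j,j}=e_{i,i}d(jj)e_{j,j}$; the $(j,i)$ entry lies in column $i$ and in row $j$, giving the companion identity. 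Every remaining listed equality has exactly this shape, namely equality of one fixed block read off two of the elements $d(\cdot\cdot)$; it follows verbatim by invoking the $2$-local property for the pair $e_{p,p},e_{q,q}$ whose indices name that block, using antisymmetry of the $d(\cdot\cdot)$ to pass between the $(p,q)$ and $(q,p)$ entries when convenient.

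The main obstacle is precisely this cross-determination phenomenon: $\Delta(e_{l,l})$ leaves all entries of $d(ll)$ off its $l$-th cross entirely free, so a block $(p,q)$ can be compared between two of the $d(\cdot\cdot)$ only once it sits simultaneously in the determined cross of each. The real content of the lemma is therefore the bookkeeping that selects, for each listed block, the pair of diagonal idempotents that brings that block into two crosses at once; after the right pair is chosen each identity is immediate. Two sanity checks guide the write-up: the diagonal line is forced to be $0=0$ by Lemma \ref{3.0}, and the whole argument is literally the $n>2$ lifting of the two-index case already carried out in Lemma \ref{3.1} and Theorem \ref{3.3}, so I would phrase the sublemma so that those results become its $n=2$ instance.
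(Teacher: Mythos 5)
Your treatment of the first two displayed lines is correct and is essentially the paper's own argument in cleaner form: the diagonal entries of each $d(ll)$ vanish by Lemma \ref{3.0}, so the first line is $0=0$; and for the $(i,j)$ and $(j,i)$ blocks you apply the 2-local hypothesis to the pair $e_{i,i}$, $e_{j,j}$ and use the fact that the relation $\Delta(e_{l,l})=[d,e_{l,l}]$ pins down exactly the off-diagonal entries of $d$ in row $l$ and column $l$. Your ``cross'' sublemma is precisely the content of Lemmas \ref{3.1} and \ref{3.4}, so up to this point you and the paper run in parallel.

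The gap is in your final claim that ``every remaining listed equality has exactly this shape.'' It does not. The block $(j,k)$ with $k\neq i,j$ lies in the cross of $d(jj)$ (row $j$) but \emph{not} in the cross of $d(ii)$, since it meets neither row $i$ nor column $i$; symmetrically, $(i,k)$ lies in the cross of $d(ii)$ but not of $d(jj)$. Hence no pair of diagonal matrix units hands you a common representing element that sees both sides of $e_{j,j}d(ii)e_{k,k}=e_{j,j}d(jj)e_{k,k}$ or of $e_{i,i}d(ii)e_{k,k}=e_{i,i}d(jj)e_{k,k}$, and your method yields nothing for the third and fourth displayed lines. The obstruction is not merely technical: the entry $e_{j,j}d(ii)e_{k,k}$ is genuinely undetermined by the data defining $d(ii)$, because adjoining the term $D_{e_{j,j},\,e_{j,k}+e_{k,j}}$ to the representing family leaves $\Delta(e_{i,i})$ unchanged (indeed $[e_{j,j},e_{j,k}+e_{k,j}]=e_{j,k}-e_{k,j}$ commutes with $e_{i,i}$) while shifting $d(ii)^{j,k}$ by $\frac14$. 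Antisymmetry of the $d(\cdot\cdot)$ does not help, since the cross is already symmetric under transposition. Be aware that the paper's own proof of these lines is no better: it invokes the uncompressed identity $[d(ii),e_{j,j}]=[d(jj),e_{j,j}]$, which the compressed statement of Lemma \ref{3.4} does not supply. What is actually used downstream (in Lemma \ref{3.5} and Theorem \ref{3.11}) are the two-cross instances such as $e_{k,k}d(ii)e_{i,i}=e_{k,k}d(kk)e_{i,i}$, and those your argument does establish; but as a proof of the lemma as stated, the $k\neq i,j$ cases remain open.
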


\begin{proof}
The equality
$$
e_{i,i}d(ii)e_{i,i}=e_{j,j}d(ii)e_{j,j},\,\,\,\,\,\,(2)
$$
is proved similar to Lemma \ref{3.0}.
The equality
$$
e_{i,i}d(ii)e_{i,i}=e_{j,j}d(jj)e_{j,j}
$$
follows from Lemma \ref{3.4} and equality (2). We have
$$
d(ii)e_{i,i}-e_{i,i}d(ii)=d(jj)e_{i,i}-e_{i,i}d(jj)
$$
by Lemma \ref{3.4}. Hence
$$
(d(ii)e_{i,i}-e_{i,i}d(ii))e_{j,j}=(d(jj)e_{i,i}-e_{i,i}d(jj))e_{j,j},
$$
$$
e_{j,j}(d(ii)e_{i,i}-e_{i,i}d(ii))=e_{j,j}(d(jj)e_{i,i}-e_{i,i}d(jj))
$$
and
$$
e_{i,i}d(ii)e_{j,j}=e_{i,i}d(jj)e_{j,j}, e_{j,j}d(ii)e_{i,i}=e_{j,j}d(jj)e_{i,i}.
$$
Similarly, from
$$
d(ii)e_{j,j}-e_{j,j}d(ii)=d(jj)e_{j,j}-e_{j,j}d(jj)
$$
it follows that
$$
e_{j,j}d(ii)e_{k,k}=e_{j,j}d(jj)e_{k,k}, e_{k,k}d(ii)e_{j,j}=e_{k,k}d(jj)e_{j,j}.
$$
The proof is complete.
\end{proof}

Let $i$, $j$ be arbitrary distinct indices from
$\{1,2,\dots,n\}$, let
$$
a_{i,i}=e_{i,i}d(ii)e_{i,i}, a^{i,i}e_{i,i}=d(ii)^{i,i}e_{i,i}, a^{i,i}\in \Re, d(ii)^{i,i}\in \Re,
$$
$$
a_{i,j}=e_{i,i}d(ii)e_{j,j}, a^{i,j}e_{i,j}=d(ii)^{i,j}e_{i,j}, a^{i,j}\in \Re, d(ii)^{i,j}\in \Re
$$
and
$$
\bar{a}=\sum_{k,l=1}^n a_{k,l}.
$$
By Lemma \ref{3.41} $\bar{a}$ is defined correctly.

\begin{lemma} \label{3.5}
For each pair $i$, $j$ of distinct indices
the following equality is valid
$$
\bigtriangleup(\bar{e}_{i,j})=\bar{a}\bar{e}_{i,j}-\bar{e}_{i,j}\bar{a}.    \,\,\,\,\,\,\,(3)
$$
\end{lemma}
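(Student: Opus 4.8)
The plan is to prove the stated identity (3) blockwise, comparing the components $e_{s,s}\Delta(\bar{e}_{i,j})e_{t,t}$ of the two sides for every pair $s,t$. First I would record the shape of the right-hand side. By Lemma \ref{3.0} the diagonal components of a sum of commutators of symmetric matrices vanish (here $2$ is invertible), so $a_{k,k}=e_{k,k}d(kk)e_{k,k}=0$ and $\bar{a}$ is purely off-diagonal. A direct expansion of $\bar{a}\,\bar{e}_{i,j}=\sum_p a^{p,i}e_{p,j}+\sum_p a^{p,j}e_{p,i}$ and $\bar{e}_{i,j}\bar{a}=\sum_q a^{j,q}e_{i,q}+\sum_q a^{i,q}e_{j,q}$ shows that $\bar{a}\,\bar{e}_{i,j}-\bar{e}_{i,j}\bar{a}$ is supported exactly on the $2\times 2$ corner indexed by $\{i,j\}$ together with the ``cross'' blocks $(k,i),(i,k),(k,j),(j,k)$ for $k\neq i,j$; every block $(s,t)$ with $s,t\notin\{i,j\}$ is zero. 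Since an inner derivation maps $\bar{e}_{i,j}$ to $d\,\bar{e}_{i,j}-\bar{e}_{i,j}d$, the same support holds for $\Delta(\bar{e}_{i,j})$, so it suffices to match these finitely many block types.

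For the corner block I would simply invoke Lemma \ref{3.4} with $e=e_{i,i}+e_{j,j}$: it gives $e\Delta(\bar{e}_{i,j})e=e\,d(ii)\,e\,\bar{e}_{i,j}-\bar{e}_{i,j}\,e\,d(ii)\,e$, and by Lemma \ref{3.41} one identifies $e\,d(ii)\,e=a_{i,j}+a_{j,i}=e\bar{a}e$, which equals $e(\bar{a}\,\bar{e}_{i,j}-\bar{e}_{i,j}\bar{a})e$ since $\bar{e}_{i,j}=e\bar{e}_{i,j}e$. For the cross blocks I would use the $2$-local hypothesis twice. Applying it to the pair $(e_{i,i},\bar{e}_{i,j})$ produces an inner derivation whose implementing element $d_1=\tfrac{1}{4}\sum_k[p_k,q_k]$ satisfies $\Delta(e_{i,i})=d_1e_{i,i}-e_{i,i}d_1$ and $\Delta(\bar{e}_{i,j})=d_1\bar{e}_{i,j}-\bar{e}_{i,j}d_1$. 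Comparing the first equation with $\Delta(e_{i,i})=d(ii)e_{i,i}-e_{i,i}d(ii)$ through the argument of Lemma \ref{3.1} pins the $i$-th row and $i$-th column of $d_1$ to those of $d(ii)$, hence, via Lemma \ref{3.41}, to the corresponding components of $\bar{a}$. Expanding $d_1\bar{e}_{i,j}-\bar{e}_{i,j}d_1$ then reproduces exactly the blocks $(k,j)$ and $(j,k)$ of $\bar{a}\,\bar{e}_{i,j}-\bar{e}_{i,j}\bar{a}$, because their coefficients $d_1^{k,i}$ and $-d_1^{i,k}$ lie in column $i$ and row $i$ of $d_1$. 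Symmetrically, applying the hypothesis to $(e_{j,j},\bar{e}_{i,j})$ pins the $j$-th row and column of the corresponding element to $d(jj)$ and reproduces the remaining blocks $(k,i)$ and $(i,k)$, whose coefficients sit in column $j$ and row $j$.

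Since $\Delta(\bar{e}_{i,j})$ is a single fixed element, assembling the blocks determined by the two applications, together with the corner supplied by Lemma \ref{3.4}, yields (3) on every nonzero block; the zero blocks match trivially. Throughout I would use $d_1^{k,k}=\tfrac{1}{4}\sum_k[p_k,q_k]^{k,k}=0$ from Lemma \ref{3.0} to discard the diagonal components that would otherwise appear in the $(i,j),(j,i),(i,i),(j,j)$ positions, so that those positions reduce to $a^{i,j}$-type entries and agree with the target.

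The main obstacle is precisely that a single matched inner derivation is only determined on the row and column through the idempotent at which it agrees with $\Delta$: matching at $e_{i,i}$ controls column $i$ and row $i$ of $d_1$ but leaves the $j$-th row and column (and the genuinely off-$\{i,j\}$ part) free, so the blocks $(k,i),(i,k)$ cannot be reached from the pair $(e_{i,i},\bar{e}_{i,j})$ alone. The resolution---matching a second time at $e_{j,j}$ and gluing the two partial computations through the consistency relations of Lemma \ref{3.41}, which are exactly what make $\bar{a}$ well defined independently of the chosen diagonal idempotent---is the crux, and the vanishing of the diagonal entries forced by Lemma \ref{3.0} is what makes the two blockwise computations mutually consistent on the corner.
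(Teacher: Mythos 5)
Your proof is correct and follows essentially the same strategy as the paper's: a blockwise comparison of $\Delta(\bar e_{i,j})$ with $\bar a\,\bar e_{i,j}-\bar e_{i,j}\bar a$, pinning the relevant rows and columns of the implementing elements through the $2$-local hypothesis at diagonal idempotents together with the consistency relations of Lemmas \ref{3.1} and \ref{3.41}, and using Lemma \ref{3.0} to discard the diagonal entries. The only (harmless) variation is that for the cross blocks $(k,i),(i,k),(k,j),(j,k)$ you match $\Delta$ at $e_{i,i}$ and $e_{j,j}$ and read the needed coefficient off the pinned column/row $i$ or $j$, whereas the paper matches at the third idempotent $e_{k,k}$ and reads it off row/column $k$; Lemma \ref{3.41} guarantees the two choices give the same entry of $\bar a$.
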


\begin{proof}
Let $k$ be an arbitrary index different from $i$
and let $c_1$, $c_2$, $\dots,$ $c_m$, $d_1$, $d_2$, $\dots,$ $d_m\in H_n(\Re)$ be elements such that
$$
\Delta (\bar{e}_{i,j})=\sum_{k=1}^m D_{c_k,d_k}(\bar{e}_{i,j}), \Delta (e_{k,k})=\sum_{k=1}^m D_{c_k,d_k}(e_{k,k}).
$$
Then
$$
\Delta (\bar{e}_{i,j})=D_{\frac{1}{4}\sum_{k=1}^m [c_k,d_k]}(\bar{e}_{i,j}), \Delta (e_{k,k})=D_{\frac{1}{4}\sum_{k=1}^m [c_k,d_k]}(e_{k,k})
$$
and
$$
e_{k,k}\Delta(\bar{e}_{i,j})e_{i,i}=e_{k,k}D_{\frac{1}{4}\sum_{k=1}^m [c_k,d_k]}(\bar{e}_{i,j})e_{i,i}=
e_{k,k}\frac{1}{4}(\sum_{k=1}^m [c_k,d_k])e_{j,i}-0
$$
$$
=e_{k,k}d(kk)e_{j,i}-0=e_{k,k}a_{k,j}e_{j,i}-e_{k,k}\bar{e}_{i,j}(\sum_{k,l=1}^n a_{k,l})e_{i,i}
$$
$$
=e_{k,k}(\sum_{k,l=1}^n a_{k,l})\bar{e}_{i,j}e_{i,i}-e_{k,k}\bar{e}_{i,j}(\sum_{k,l=1}^n a_{k,l})e_{i,i}
$$
$$
=e_{k,k}[\bar{a}\bar{e}_{i,j}-\bar{e}_{i,j}\bar{a}]e_{i,i}.
$$
Similarly,
$$
e_{i,i}\Delta(\bar{e}_{i,j})e_{k,k}=
e_{i,i}(\bar{a}\bar{e}_{i,j}-\bar{e}_{i,j}\bar{a})e_{k,k}, e_{i,i}\Delta(\bar{e}_{i,j})e_{i,i}=
e_{i,i}(\bar{a}e_{i,i}-e_{i,i}\bar{a})e_{i,i}.
$$
Since $i$ and $j$ are mutually symmetric we have
$$
e_{j,j}\Delta(\bar{e}_{i,j})e_{k,k}=
e_{j,j}(\bar{a}\bar{e}_{i,j}-\bar{e}_{i,j}\bar{a})e_{k,k}, e_{k,k}\Delta(\bar{e}_{i,j})e_{j,j}=
e_{k,k}(\bar{a}\bar{e}_{i,j}-\bar{e}_{i,j}\bar{a})e_{j,j}
$$
and
$$
e_{j,j}\Delta(\bar{e}_{i,j})e_{j,j}=
e_{j,j}(\bar{a}e_{i,i}-e_{i,i}\bar{a})e_{j,j}.
$$
Hence
$$
\Delta (\bar{e}_{i,j})=\sum_{k,l=1}^n e_{k,k}(\Delta (\bar{e}_{i,j}))e_{l,l}
$$
$$
=\sum_{k=1}^n e_{k,k}(\Delta (\bar{e}_{i,j}))e_{i,i}+\sum_{l=1}^n e_{i,i}(\Delta (\bar{e}_{i,j}))e_{l,l}
$$
$$
+\sum_{k=1}^n e_{k,k}(\Delta (\bar{e}_{i,j}))e_{j,j}+\sum_{l=1}^n e_{j,j}(\Delta (\bar{e}_{i,j}))e_{l,l}+
\sum_{k,l=1, k,l\neq i,j}^n e_{k,k}(\Delta (\bar{e}_{i,j}))e_{l,l}
$$
$$
=\sum_{k=1}^n e_{k,k}[\bar{a}\bar{e}_{i,j}-\bar{e}_{i,j}\bar{a}]e_{i,i}+\sum_{l=1}^n e_{i,i}[\bar{a}\bar{e}_{i,j}-\bar{e}_{i,j}\bar{a}]e_{l,l}
$$
$$
+\sum_{k=1}^n e_{k,k}[\bar{a}\bar{e}_{i,j}-\bar{e}_{i,j}\bar{a}]e_{j,j}+\sum_{l=1}^n e_{j,j}[\bar{a}\bar{e}_{i,j}-\bar{e}_{i,j}\bar{a}]e_{l,l}+
\sum_{k,l=1, k,l\neq i,j}^n e_{k,k}[\bar{a}\bar{e}_{i,j}-\bar{e}_{i,j}\bar{a}]e_{l,l}
$$
$$
=\bar{a}\bar{e}_{i,j}-\bar{e}_{i,j}\bar{a}.
$$
Hence the equality (3) is valid. This completes the proof.
\end{proof}

\medskip

\begin{theorem} \label{3.11}
Every 2-local inner derivation on $H_n(\Re)$ is a derivation.
\end{theorem}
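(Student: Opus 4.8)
The plan is to show that $\Delta$ coincides with the inner derivation $D_{\bar a}\colon x\mapsto \bar a x-x\bar a$ determined by the element $\bar a=\sum_{k,l}a_{k,l}$ assembled just before Lemma \ref{3.5}. Two preliminary observations drive everything. First, every conjugating element produced by the $2$-local inner derivation has the form $d=\frac14\sum_s[c_s,d_s]$ with $c_s,d_s\in H_n(\Re)$ symmetric; since $[c_s,d_s]^{T}=-[c_s,d_s]$, each such $d$ is \emph{antisymmetric}, and in particular $d^{p,p}=0$ for every $p$ (recall $2$ is invertible). Second, by Lemma \ref{3.41} the matrix $\bar a$ is itself antisymmetric: indeed $\bar a^{k,l}=d(kk)^{k,l}=d(ll)^{k,l}=-d(ll)^{l,k}=-\bar a^{l,k}$. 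Consequently $D_{\bar a}$ maps $H_n(\Re)$ into itself and is an inner derivation of the Jordan ring, so it suffices to prove $\Delta(x)=\bar a x-x\bar a$ for every $x\in H_n(\Re)$. I would do this blockwise, establishing $e_{k,k}\Delta(x)e_{l,l}=e_{k,k}(\bar a x-x\bar a)e_{l,l}$ for all $k,l$ and then summing over the decomposition $\Delta(x)=\sum_{k,l}e_{k,k}\Delta(x)e_{l,l}$.

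For an off-diagonal block, fix distinct $i,j$ and choose, by the $2$-local hypothesis, an inner derivation $\sum_s D_{c_s,d_s}$ agreeing with $\Delta$ on both $\bar e_{i,j}$ and $x$; put $d=\frac14\sum_s[c_s,d_s]$, so $\Delta(x)=dx-xd$ and $\Delta(\bar e_{i,j})=d\bar e_{i,j}-\bar e_{i,j}d$. By Lemma \ref{3.5} the latter equals $\bar a\bar e_{i,j}-\bar e_{i,j}\bar a$, so the antisymmetric element $\delta:=d-\bar a$ commutes with $\bar e_{i,j}$. Writing $\delta$ in block form relative to the splitting $\{i,j\}$ versus $\{1,\dots,n\}\setminus\{i,j\}$, the relation $\delta\bar e_{i,j}=\bar e_{i,j}\delta$ forces the two rectangular blocks joining $\{i,j\}$ to its complement to vanish and forces the $\{i,j\}\times\{i,j\}$ block of $\delta$ to commute with the swap of the coordinates $i,j$; together with antisymmetry this yields $\delta^{i,j}=\delta^{j,i}=\delta^{i,i}=\delta^{j,j}=0$. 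Hence $e_{j,j}\delta e_{p,p}=0$ and $e_{p,p}\delta e_{i,i}=0$ for every $p$, and expanding $e_{j,j}\Delta(x)e_{i,i}=e_{j,j}(\bar a x-x\bar a)e_{i,i}+e_{j,j}\delta x e_{i,i}-e_{j,j}x\delta e_{i,i}$ shows the two correction terms vanish.

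For a diagonal block $e_{i,i}\Delta(x)e_{i,i}$ I would instead pick an inner derivation agreeing with $\Delta$ on $e_{i,i}$ and $x$, with conjugator $d$ as above. Lemma \ref{3.41} together with the definition of $\bar a$ gives $\Delta(e_{i,i})=d(ii)e_{i,i}-e_{i,i}d(ii)=\bar a e_{i,i}-e_{i,i}\bar a$, so again $\delta=d-\bar a$ commutes with $e_{i,i}$, i.e. $\delta$ is block-diagonal for $e_{i,i}\oplus(1-e_{i,i})$. Since $\delta^{i,i}=0$ by antisymmetry, the whole row and column of $\delta$ through $i$ vanish, and the same expansion as before gives $e_{i,i}\Delta(x)e_{i,i}=e_{i,i}(\bar a x-x\bar a)e_{i,i}$. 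Summing all blocks yields $\Delta(x)=\bar a x-x\bar a=D_{\bar a}(x)$, completing the proof.

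I expect the main obstacle to be the within-corner $(j,i)$-contribution in the off-diagonal case: naively one only controls the off-corner blocks of $d$ coming from Lemma \ref{3.5}, and the $\{i,j\}\times\{i,j\}$ part appears to require a separate diagonal argument of the Lemma \ref{3} type. The point that removes this difficulty is that \emph{both} $d$ and $\bar a$ are antisymmetric, so $\delta$ is antisymmetric; an antisymmetric matrix that also commutes with $\bar e_{i,j}$ is automatically zero on the entire $\{i,j\}$-corner, which is exactly the block entering the $(j,i)$-computation. Verifying carefully that antisymmetry is available at every step—which rests on Lemma \ref{3.0} and the representation of each inner derivation via $\frac14\sum_s[c_s,d_s]$—is the crux, while the remaining block bookkeeping is routine.
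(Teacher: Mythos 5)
Your proof is correct and follows essentially the same route as the paper's: both establish $\Delta(x)=\bar a x-x\bar a$ block by block for the same element $\bar a$, using Lemma \ref{3.5} for the off-diagonal blocks and Lemma \ref{3.41} (together with the antisymmetry observation underlying Lemma \ref{3.0}) for the diagonal ones. Your packaging via the antisymmetric difference $\delta=d-\bar a$, whose relevant rows and columns are forced to vanish, is a cleaner version of the entry-by-entry bookkeeping the paper carries out explicitly, and choosing the conjugator for the pair $(\bar e_{i,j},x)$ directly is tidier than the paper's transfer of the conjugator for $(e_{i,i},x)$ to $\bar e_{i,j}$ via Lemma \ref{3.4}.
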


\begin{proof}
We prove that the 2-local inner derivation $\Delta$ on $H_n(\Re)$ satisfies
the condition
$$
\Delta (x)=D_{\bar{a}}(x)=\bar{a}x-x\bar{a}, x\in H_n(\Re)).
$$

Let $x$ be an arbitrary element in $H_n(\Re)$ and let
$c_1$, $c_2$, $\dots,$ $c_m$, $d_1$, $d_2$, $\dots,$ $d_m\in H_n(\Re)$ be elements such that
$$
\Delta (e_{i,i})=\sum_{k=1}^m D_{c_k,d_k}(e_{i,i}), \Delta (x)=\sum_{k=1}^m D_{c_k,d_k}(x).
$$
Then
$$
\Delta (e_{i,i})=D_{\frac{1}{4}\sum_{k=1}^m [c_k,d_k]}(e_{i,i}),
\Delta (\bar{e}_{i,j})=D_{\frac{1}{4}\sum_{k=1}^m [c_k,d_k]}(\bar{e}_{i,j}),
$$
$$
\Delta (x)=D_{\frac{1}{4}\sum_{k=1}^m [c_k,d_k]}(x)
$$
by Lemma \ref{3.4}. Let $d=\frac{1}{4}\sum_{k=1}^m [c_k,d_k]$. Then
$$
\Delta(\bar{e}_{i,j})=de_{i,i}-e_{i,i}d \,\, \text{and}\,\,
\Delta(x)=dx-xd.
$$
By Lemma \ref{3.5} we have the following equalities
$$
\Delta(\bar{e}_{i,j})=d\bar{e}_{i,j}-\bar{e}_{i,j}d=
(e_{i,i}+e_{j,j})d\bar{e}_{i,j}-\bar{e}_{i,j}d(e_{i,i}+e_{j,j})
$$
$$
+(1-(e_{i,i}+e_{j,j}))d\bar{e}_{i,j}-\bar{e}_{i,j}d(1-(e_{i,i}+e_{j,j}))=
\bar{a}\bar{e}_{i,j}-\bar{e}_{i,j}\bar{a}
$$
for all $i$. We have
$$
(e_{i,i}+e_{j,j})d\bar{e}_{i,j}-\bar{e}_{i,j}d(e_{i,i}+e_{j,j})=
(e_{i,i}+e_{j,j})\bar{a}\bar{e}_{i,j}-\bar{e}_{i,j}\bar{a}(e_{i,i}+e_{j,j})
$$
and
$$
(1-(e_{i,i}+e_{j,j}))de_{i,i}=(1-(e_{i,i}+e_{j,j}))\bar{a}e_{i,i},
$$
$$
e_{i,i}d(1-(e_{i,i}+e_{j,j}))=e_{i,i}\bar{a}(1-(e_{i,i}+e_{j,j}))
$$
for all $i$. Also we have
$$
a_{i,j}=e_{i,i}de_{j,j}, a_{j,i}=e_{j,j}de_{i,i}
$$
by Lemma \ref{3.41}, and
$$
e_{i,i}de_{j,j}=e_{i,i}\bar{a}e_{j,j}, e_{j,j}de_{i,i}=e_{j,j}\bar{a}e_{i,i}.
$$
Hence
$$
(1-e_{i,i})de_{i,i}=(1-e_{i,i})\bar{a}e_{i,i},
e_{i,i}d(1-e_{i,i})=e_{i,i}\bar{a}(1-e_{i,i})\,\,\,\,\,\,\,\,\,\,(4)
$$
for all $i$. Therefore we have
$$
\{e_{i,i}\Delta(x)e_{j,j}\}=\frac{1}{2}[e_{i,i}\Delta(x)e_{j,j}+e_{j,j}\Delta(x)e_{i,i}]
$$
$$
=\frac{1}{2}[e_{i,i}(dx-xd)e_{j,j}+e_{j,j}(dx-xd)e_{i,i}]
$$
$$
=\frac{1}{2}[e_{i,i}d(1-e_{i,i})xe_{j,j}+
e_{i,i}de_{i,i}xe_{j,j}-e_{i,i}x(1-e_{j,j})de_{j,j}-e_{i,i}xe_{j,j}de_{j,j}
$$
$$
+e_{j,j}d(1-e_{j,j})xe_{i,i}+
e_{j,j}de_{j,j}xe_{i,i}-e_{j,j}x(1-e_{i,i})de_{i,i}-e_{j,j}xe_{i,i}de_{i,i}]
$$
$$
=\frac{1}{2}[e_{i,i}\bar{a}(1-e_{i,i})xe_{j,j}+
e_{i,i}de_{i,i}xe_{j,j}-e_{i,i}x(1-e_{j,j})\bar{a}e_{j,j}-e_{i,i}xe_{j,j}de_{j,j}
$$
$$
+e_{j,j}\bar{a}(1-e_{j,j})xe_{i,i}+
e_{j,j}de_{j,j}xe_{i,i}-e_{j,j}x(1-e_{i,i})\bar{a}e_{i,i}-e_{j,j}xe_{i,i}de_{i,i}]
$$
$$
=\frac{1}{2}[e_{i,i}\bar{a}(1-e_{i,i})xe_{j,j}-e_{i,i}x(1-e_{j,j})\bar{a}e_{j,j}+
e_{i,i}d(ii)e_{i,i}xe_{j,j}-e_{i,i}xe_{j,j}d(ii)e_{j,j}
$$
$$
+e_{j,j}\bar{a}(1-e_{j,j})xe_{i,i}-e_{j,j}x(1-e_{i,i})\bar{a}e_{i,i}+
e_{j,j}d(ii)e_{j,j}xe_{i,i}-e_{j,j}xe_{i,i}d(ii)e_{i,i}]
$$
$$
=\frac{1}{2}[e_{i,i}\bar{a}(1-e_{i,i})xe_{j,j}-e_{i,i}x(1-e_{j,j})\bar{a}e_{j,j}+
a_{i,i}xe_{j,j}-e_{i,i}xa_{j,j}
$$
$$
+e_{j,j}\bar{a}(1-e_{j,j})xe_{i,i}-e_{j,j}x(1-e_{i,i})\bar{a}e_{i,i}+
a_{j,j}xe_{i,i}-e_{j,j}xa_{i,i}]
$$
$$
=\{e_{jj}(\bar{a}x-x\bar{a})e_{ii}\}
$$
by Lemma \ref{3.41}. Also by equalities (4) we have
$$
e_{ii}\bigtriangleup(x)e_{ii}=e_{ii}(dx-xd)e_{ii}
$$
$$
=e_{ii}d(1-e_{ii})xe_{ii}+
e_{ii}de_{ii}xe_{ii}-e_{ii}x(1-e_{ii})de_{ii}-e_{ii}xe_{ii}de_{ii}
$$
$$
=e_{ii}\bar{a}(1-e_{ii})xe_{ii}+
e_{ii}de_{ii}xe_{ii}-e_{ii}x(1-e_{ii})\bar{a}e_{ii}-e_{ii}xe_{ii}de_{ii}
$$
$$
=e_{ii}\bar{a}(1-e_{ii})xe_{ii}+
e_{ii}d(ii)e_{ii}xe_{ii}-e_{ii}x(1-e_{ii})\bar{a}e_{ii}-e_{ii}xe_{ii}d(ii)e_{ii}
$$
$$
=e_{ii}(\bar{a}x-x\bar{a})e_{ii}.
$$
Hence
$$
\Delta(x)=\bar{a}x-x\bar{a}
$$
for all $x\in H_n(\Re)$, and $\Delta$ is a derivation on $H_n(\Re)$. Therefore
$\Delta$ is a derivation on $H_n(\Re)$. Since $\Delta$ is
chosen arbitrarily every 2-local inner derivation on $H_n{\Re}$
is a derivation. The proof is complete.
\end{proof}

\end{document}